\documentclass[preprint,12pt]{elsarticle}

\usepackage[utf8]{inputenc}
\usepackage[T1]{fontenc}
\usepackage{lmodern}
\usepackage{microtype}
\usepackage{amsmath,amssymb,amsthm,mathtools}
\usepackage{enumitem}
\usepackage{hyperref}
\usepackage[nameinlink,capitalize,noabbrev]{cleveref}

\journal{Journal of Mathematical Analysis and Applications}

\numberwithin{equation}{section}

\makeatletter
\def\ps@pprintTitle{%
    \let\@oddhead\@empty
    \let\@evenhead\@empty
    \let\@oddfoot\@empty
    \let\@evenfoot\@empty}
\makeatother

\hypersetup{
    colorlinks=true,
    linkcolor=blue,
    citecolor=blue,
    urlcolor=blue,
    pdftitle={Order-Moment Transport and Hankel Determinants in Special-Function Inequalities},
    pdfauthor={Domingos S. P. Salazar},
    pdfsubject={Order-moment transport, Hankel determinants, q-gamma functions, Mills ratio, Tricomi functions, and real-order Turan inequalities},
    pdfkeywords={total positivity, moment transform, Hankel determinant, q-gamma function, Mills ratio, Tricomi function, Coulomb regularization}
}

\newtheorem{theorem}{Theorem}[section]
\newtheorem{lemma}[theorem]{Lemma}
\newtheorem{proposition}[theorem]{Proposition}
\newtheorem{corollary}[theorem]{Corollary}

\theoremstyle{definition}

\newtheorem{remark}[theorem]{Remark}

\newcommand{\R}{\mathbb{R}}
\newcommand{\N}{\mathbb{N}}
\newcommand{\dd}{\,\mathrm{d}}

\newcommand{\STP}{\mathrm{STP}}
\newcommand{\TN}{\mathrm{TN}}

\begin{document}

\begin{frontmatter}

\title{Order-Moment Transport and Hankel Determinants in Special-Function Inequalities}

\author[addr1]{Domingos S. P. Salazar\corref{cor1}}
\address[addr1]{Unidade de Educa\c{c}\~ao a Dist\^ancia e Tecnologia,
Universidade Federal Rural de Pernambuco,
52171-900 Recife, Pernambuco, Brazil}
\cortext[cor1]{Corresponding author.}

\begin{abstract}
Scalar inequalities in an order parameter often arise as the \(2\times2\)
shadow of a stronger Hankel determinant statement.  We record a
moment-representation criterion: positive exponential and Mellin order
representations, together with gamma-normalized completely monotone averages,
generate totally nonnegative Hankel kernels, with strictness controlled by the
support of the representing measure.  The criterion packages the classical total-positivity mechanism as a
recognition calculus for special-function inequalities, turning the order
parameter into a moment exponent after the correct normalization.

The applications include three named determinant lifts.  First, we prove the
positive Jackson \(q\)-gamma Hankel conjecture of
Karp--Vishnyakova--Zhang: for \(0<q<1\), the kernel
\((x,y)\mapsto\Gamma_q(x+y)\) is \(\STP_\infty\).  This is an atomic
Mellin-moment instance of the general criterion; the reciprocal
sign-regularity problem for \(1/\Gamma_q\) is separate and is not addressed
here.  Second, we answer Yang's continuous half-gamma Mills-ratio
log-convexity question and strengthen it to strict total positivity, hence to
all higher Hankel Tur\'an determinants.  Third, we treat Tricomi rays and the
one-dimensional Coulomb regularization as all-minor Hankel determinant
hierarchies.  For the Coulomb regularization, the \(2\times2\) minor gives the
scalar log-convexity question recorded by Baricz--Pog\'any, and the full
theorem supplies the corresponding all-minor strengthening.
\end{abstract}

\begin{keyword}
Total positivity \sep moment transform \sep Mills ratio \sep completely monotone function \sep Mellin transform \sep Laplace transform \sep Jackson \(q\)-gamma function \sep Tricomi function \sep Coulomb regularization \sep Tur\'an determinant
\MSC[2020] 15B48 \sep 26A48 \sep 26D15 \sep 33C10 \sep 33C15 \sep 33D05 \sep 44A10 \sep 60E15
\end{keyword}

\end{frontmatter}

\section{Motivation and scope}

A recurring pattern in special-function inequalities is that the source problem is asked at scalar level.  One asks whether a parameter family is log-convex, whether a Tur\'an expression has a fixed sign, or whether an integer-order inequality admits a real-order interpolation.  These questions are natural, but often underspecify the structure.  Once the right order variable and normalization are exposed, the scalar inequality may become the first nontrivial minor of a totally positive kernel.

The guiding principle is that a scalar order inequality can often be recognized as the first visible case of a determinant statement.  Once the order parameter is realized as a moment exponent, total positivity supplies the full hierarchy of Hankel minors.  The mathematical tools are classical: total positivity and sign-regular kernels \cite{Karlin1968,Pinkus2010}, generalized Vandermonde and Chebyshev-system criteria \cite{Karlin1968,Pinkus2010}, Andreief's identity \cite{Andreief1886,Forrester2019}, and Bernstein--Widder representation for completely monotone functions \cite{Widder1946,SchillingSongVondracek2012}.  Here these tools serve as a recognition calculus for special-function inequalities: the real-order parameter becomes a moment exponent.

Gamma normalization supplies the key exponent-extraction step.  It can turn
a shape parameter into a Mellin exponent.  Let \(g\) be completely monotone on
\((0,\infty)\), with Bernstein--Widder representation
\begin{equation}\label{eq:intro-cm-gamma-trigger}
        g(y)=\int_{[0,\infty)}e^{-sy}\,\dd\mu(s),
\end{equation}
where \(\mu\) is a positive measure.  Then gamma averaging gives
\begin{equation}\label{eq:intro-gamma-extraction}
        \frac{1}{\Gamma(a)}\int_0^\infty y^{a-1}e^{-y}g(y)\,\dd y
        =
        \int_{[0,\infty)}(1+s)^{-a}\,\dd\mu(s).
\end{equation}
Thus the parameter \(a\) becomes the exponent in a positive moment
representation.  This elementary identity is the trigger for the
special-function applications below.

We distinguish this recognition step from several existing strands of the
literature.  Moment kernels and generalized Vandermonde determinants are
standard objects in total positivity theory \cite{Karlin1968,Pinkus2010}.
Determinantal inequalities for special functions are also well developed:
Ismail and Laforgia proved monotonicity properties of determinant functions
involving special functions \cite{IsmailLaforgia2007}; Baricz and Ismail
proved sharp Tur\'an inequalities and complete-monotonicity results for Tricomi
Tur\'anians \cite{BariczIsmail2013}; and Karp and Sitnik studied parameter
log-convexity and log-concavity for hypergeometric-like functions
\cite{KarpSitnik2010}.  Karp, Vishnyakova, and Zhang conjectured strict total
positivity of the Jackson \(q\)-gamma Hankel kernel
\((x,y)\mapsto\Gamma_q(x+y)\), while leaving the reciprocal sign-regularity
problem open \cite{KarpVishnyakovaZhang2025}.  More recently, Yang proved shifted-parameter
complete-monotonicity and convexity consequences for Tricomi functions and
formulated the half-gamma Mills-ratio question treated below
\cite{Yang2025Extension}.  The present paper identifies cases where scalar inequalities lift to a full
Hankel total-positivity structure.

The central model is this.  Suppose
\begin{equation}\label{eq:order-moment-model}
        F_x(\theta)=\int_E e^{\theta\phi(t)}\,\dd\mu_x(t),
\end{equation}
where \(\mu_x\) is a positive measure.  After the push-forward \(u=e^{\phi(t)}\), this becomes a Mellin moment
\begin{equation}\label{eq:mellin-model-intro}
        F_x(\theta)=\int_0^\infty u^\theta\,\dd\nu_x(u).
\end{equation}
Consequently the Hankel kernel \((a,b)\mapsto F_x(a+b)\) is totally nonnegative, and is strictly totally positive whenever the representing measure has enough support.  Ordinary log-convexity is then just the \(2\times2\) shadow of the all-order determinant inequalities
\begin{equation}\label{eq:det-hierarchy-intro}
        \det\big[F_x(a_i+b_j)\big]_{i,j=1}^m>0,
        \qquad a_1<\cdots<a_m,\quad b_1<\cdots<b_m .
\end{equation}

The paper develops this recognition principle before the special-function applications.  The first part proves a general transport theorem for positive exponential and Mellin order representations, then isolates the normalizations that preserve strict total positivity: positive row-column factors, affine order changes, push-forwards, atomic support, and gamma averaging.  This normalization calculus explains why certain gamma factors appear naturally in source Tur\'an questions, and why discrete Mellin support is already enough for strictness of every finite minor.

Three special-function outputs are developed from this principle.  The first is
a direct atomic Mellin application.  The standard Jackson \(q\)-gamma expansion
turns \(\Gamma_q(z)\) into the Mellin transform of a positive measure with
infinitely many atoms, and therefore gives
\begin{equation}\label{eq:intro-qgamma-kernel}
        (a,b)\longmapsto \Gamma_q(a+b)
\end{equation}
as an \(\STP_\infty\) kernel for \(0<q<1\).  This resolves the positive
Hankel part of the Karp--Vishnyakova--Zhang conjectural picture; the
reciprocal sign-regularity problem is not addressed here.

The second output is
a gamma-normalized Mills-ratio theorem.  Yang and Tian proved an integer-order
Tur\'an ratio for higher derivatives of the Mills ratio \cite{YangTian2025},
and Yang later formulated the corresponding continuous half-gamma
log-convexity question for
\begin{equation}\label{eq:intro-yang-half-gamma}
        p\longmapsto \frac{R_p(x)}{\Gamma((p+1)/2)} .
\end{equation}
The gamma-normalized completely monotone theorem answers this question and
upgrades it to all Hankel minors.

The third output is a Tricomi/Coulomb determinant theorem.  For \(z>0\) and
\(\delta<1\), the ray
\begin{equation}\label{eq:intro-tricomi-ray}
        (a,b)\longmapsto U(a+b,a+b+\delta,z)
\end{equation}
is \(\STP_\infty\) on \((0,\infty)^2\).  Since the Coulomb regularization satisfies
\begin{equation}\label{eq:intro-coulomb-tricomi}
        V_q(x)=x^{2q+1}U(q+1,q+3/2,x^2),
\end{equation}
the case \(\delta=1/2\) gives
\begin{equation}\label{eq:intro-coulomb-kernel}
        (a,b)\longmapsto V_{a+b-1}(x)
\end{equation}
as an \(\STP_\infty\) kernel.  Its \(2\times2\) minor recovers the
Baricz--Pog\'any scalar log-convexity question, and the theorem gives the full
all-order determinant strengthening.  Yang's shifted-Tricomi results provide
overlapping scalar log-convexity consequences \cite{Yang2025Extension}.

\subsection*{Relation to prior determinant inequalities}
The determinant implications used below are classical consequences of moment
representations, Andreief's identity, and generalized Vandermonde determinants
\cite{Andreief1886,Karlin1968,Pinkus2010,Forrester2019}.  They support the recognition and normalization results for Mills, Tricomi,
and Coulomb order parameters developed below.

\section{Total positivity and order-moment transport}

We first recall the determinant language \cite{Karlin1968,Pinkus2010}.  Let \(X,Y\) be ordered sets.  A kernel \(K:X\times Y\to\R\) is totally nonnegative of order \(m\), written \(\TN_m\), if
\begin{equation}
        \det[K(x_i,y_j)]_{i,j=1}^r\geq0
\end{equation}
for every \(1\leq r\leq m\) and every strictly increasing \(x_1<\cdots<x_r\) in \(X\) and \(y_1<\cdots<y_r\) in \(Y\).  It is strictly totally positive of order \(m\), written \(\STP_m\), if all these determinants are strictly positive.  We write \(\TN_\infty\) and \(\STP_\infty\) when the property holds for all \(m\).

\begin{lemma}[Generalized Vandermonde sign]\label{lem:vandermonde}
Let \(0<t_1<\cdots<t_m\) and \(\alpha_1<\cdots<\alpha_m\).  Then
\begin{equation}\label{eq:gen-vander}
        \det[t_j^{\alpha_i}]_{i,j=1}^m>0.
\end{equation}
\end{lemma}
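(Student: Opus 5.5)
The plan is the classical Descartes-rule argument: a generalized Dirichlet polynomial in real exponents has few positive zeros. That gives nonvanishing of the determinant, and a homotopy (continuity) argument then fixes its sign. Substituting \(t=e^{s}\), \(s_j=\log t_j\), the matrix becomes \([e^{\alpha_i s_j}]\). So it suffices to treat exponential sums with distinct real exponents.

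\textbf{Step 1 (zero count).} Show by induction on \(m\) that any nontrivial sum
\[
f(s)=\sum_{i=1}^m c_i e^{\alpha_i s}, \qquad \alpha_1<\cdots<\alpha_m,
\]
has at most \(m-1\) real zeros. The case \(m=1\) is trivial. For the inductive step, multiply by \(e^{-\alpha_1 s}\), which leaves the zeros unchanged, and differentiate. This kills the constant term and produces an exponential sum with \(m-1\) distinct exponents \(\alpha_i-\alpha_1\) and coefficients \(c_i(\alpha_i-\alpha_1)\); these coefficients are not all zero unless \(c_2=\cdots=c_m=0\), which is the trivial case. By Rolle's theorem, if \(f\) had \(m\) zeros, the derivative would have \(m-1\), contradicting the inductive hypothesis.

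\textbf{Step 2 (nonsingularity).} If the determinant vanished, there would be a nonzero vector \(c\) with \(\sum_i c_i t_j^{\alpha_i}=0\) for all \(j\). Then the corresponding \(f\) would vanish at the \(m\) distinct points \(s_j\), contradicting Step 1. Hence the determinant is nonzero for every admissible configuration.

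\textbf{Step 3 (sign).} This is the main point requiring care. The configuration space
\[
\{0<t_1<\cdots<t_m\}\times\{\alpha_1<\cdots<\alpha_m\}
\]
is a product of convex sets, hence connected. The determinant is continuous on it and, by Step 2, never zero, so it has constant sign there. To identify the sign, evaluate at one configuration: take \(\alpha_i=i-1\). The matrix is then the classical Vandermonde matrix with determinant \(\prod_{i<j}(t_j-t_i)>0\), where the determinant is taken in the orientation with row index \(i\) on the exponent and column index \(j\) on the nodes, as in \eqref{eq:gen-vander}.

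An alternative route to the sign is to let \(\alpha_m\to\infty\) and expand, but the connectedness argument is cleaner. The only things to check are that both ordered chambers are indeed convex and that the matrix orientation matches the statement.
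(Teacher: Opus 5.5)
Your proof is correct, but it takes a different route from the paper. The paper also substitutes $t_j=e^{u_j}$. It then computes the Wronskian
$\det\bigl[\frac{d^{r-1}}{du^{r-1}}e^{\alpha_i u}\bigr]=e^{(\alpha_1+\cdots+\alpha_m)u}\prod_{i<j}(\alpha_j-\alpha_i)>0$,
declares the exponentials an extended complete Chebyshev system, and cites the Karlin--Pinkus criterion that such systems have positive collocation determinants at increasing nodes.

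You instead prove what is needed directly. Rolle's theorem gives the Haar property: a nontrivial $m$-term exponential sum has at most $m-1$ real zeros, so the determinant never vanishes. The sign is then fixed by connectedness of the convex product of the two ordered chambers, anchored at the classical Vandermonde configuration $\alpha_i=i-1$.

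The paper's version is a one-line reduction to a cited theorem. Through the ECT framework it also gives positivity for coalescing nodes with derivative data, which is not needed here. Strictly, that criterion requires all leading Wronskians $W(e^{\alpha_1u},\dots,e^{\alpha_ku})$, $k\le m$, to be positive; this follows from the same formula, although the paper only displays the case $k=m$. Your version is self-contained and in effect re-proves the special case of the cited criterion. The price is a deformation in the exponents, rather than reading the sign off the Wronskian.

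Two cosmetic points. First, the case $c_2=\cdots=c_m=0$ is not $f\equiv 0$; it is $f=c_1e^{\alpha_1 s}$ with $c_1\neq 0$, which has no zeros, and you should say so explicitly. Second, the orientation check at the end is unnecessary, since $\det M=\det M^{T}$.
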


\begin{proof}
Write \(t_j=e^{u_j}\), with \(u_1<\cdots<u_m\).  The functions \(u\mapsto e^{\alpha_i u}\) form an extended complete Chebyshev system because
\begin{equation}
        \det\left[\frac{\dd^{r-1}}{\dd u^{r-1}}e^{\alpha_i u}\right]_{i,r=1}^m
        =e^{(\alpha_1+\cdots+\alpha_m)u}
        \prod_{1\leq i<j\leq m}(\alpha_j-\alpha_i)>0.
\end{equation}
Their evaluation determinant has the same positive sign on \(u_1<\cdots<u_m\), by the standard extended Chebyshev-system criterion \cite{Karlin1968,Pinkus2010}.
\end{proof}

\begin{theorem}[Exponential-composition transport]\label{thm:exp-transport}
Let \(\rho\) be a positive Borel measure on \(\R\), and let
\begin{equation}
        H(\theta)=\int_{\R}e^{\theta u}\,\dd\rho(u)
\end{equation}
be finite on an interval \(I\).  Let \(X,Y\) be ordered sets, and let
\(\alpha:X\to\R\), \(\beta:Y\to\R\) be strictly monotone maps with the same orientation: either both are strictly increasing or both are strictly decreasing.  Suppose that
\begin{equation}
        \alpha(x)+\beta(y)\in I
\end{equation}
on the subdomain under consideration.  Then
\begin{equation}
        K(x,y)=H(\alpha(x)+\beta(y))
\end{equation}
is \(\TN_\infty\) on that subdomain.  If \(\rho\) has at least \(m\) distinct support points, then \(K\) is \(\STP_m\) on every ordered subdomain on which the displayed integrals are finite.  If \(\rho\) has infinite support, then \(K\) is \(\STP_\infty\).

If \(\alpha\) and \(\beta\) have opposite orientations, the same argument gives a sign-regular kernel rather than a totally positive kernel in the above ordering.
\end{theorem}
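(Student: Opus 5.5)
The plan is to reduce everything to Andreief's identity (the continuous Cauchy--Binet formula) combined with Lemma~\ref{lem:vandermonde}.

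Fix \(x_1<\cdots<x_r\) in \(X\) and \(y_1<\cdots<y_r\) in \(Y\), and set \(a_i=\alpha(x_i)\), \(b_j=\beta(y_j)\). Assume first that \(\alpha,\beta\) are increasing, so \(a_1<\cdots<a_r\) and \(b_1<\cdots<b_r\). Since \(e^{(a_i+b_j)u}=e^{a_iu}e^{b_ju}\), the matrix \(K(x_i,y_j)\) is a Gram-type matrix \(\int f_i g_j\,\dd\rho\) with \(f_i(u)=e^{a_iu}\) and \(g_j(u)=e^{b_ju}\). Andreief's identity then gives
\begin{equation*}
\det[K(x_i,y_j)]_{i,j=1}^r=\frac{1}{r!}\int_{\R^r}\det[e^{a_iu_k}]_{i,k=1}^r\,\det[e^{b_ju_k}]_{j,k=1}^r\,\dd\rho^{\otimes r}(u).
\end{equation*}
To justify this, I will check that all products \(e^{a_iu}e^{b_ju}\) are \(\rho\)-integrable. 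This holds because \(a_i+b_j\in I\) by hypothesis and the integrals defining \(H\) are finite there. The expansion of the two determinants then produces only finite integrals, so Fubini applies. The integrand is symmetric under permutations of \((u_1,\dots,u_r)\), since each determinant changes by the same sign. So the integral equals the integral over the ordered region \(u_1<\cdots<u_r\), without the \(1/r!\). The diagonal contributes nothing, because the determinants vanish when two \(u_k\) coincide.

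On the ordered region, I write \(t_k=e^{u_k}\). Lemma~\ref{lem:vandermonde} then shows that both determinants \(\det[t_k^{a_i}]\) and \(\det[t_k^{b_j}]\) are strictly positive. This gives \(\TN_\infty\).

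For strictness, suppose \(\rho\) has at least \(m\) support points and \(r\le m\). Then I can choose \(r\) distinct support points \(s_1<\cdots<s_r\). Small disjoint neighborhoods of these points have positive \(\rho\)-measure. Their product lies in the ordered region and has positive \(\rho^{\otimes r}\)-measure, and on it the integrand is strictly positive. Hence the determinant is strictly positive, which gives \(\STP_m\). Infinite support gives this for every \(m\), hence \(\STP_\infty\).

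If \(\alpha,\beta\) are both decreasing, the sequences \(a_i\) and \(b_j\) are both reversed. Each of the two determinants then acquires the same sign \((-1)^{r(r-1)/2}\), and these cancel in the product. If the orientations are opposite, only one determinant picks up that sign. The minors of order \(r\) then have the fixed sign \((-1)^{r(r-1)/2}\), which is the sign-regular conclusion.

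The main obstacle is the measure-theoretic care needed in the Andreief step and in the strictness argument: integrability of every term, and the positivity of the neighborhood product measure. I will handle the first through the finiteness of \(H\) at the points \(a_i+b_j\). For the second, I will use the definition of support, namely that every open neighborhood of a support point has positive measure.
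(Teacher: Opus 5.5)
Your proposal is correct and follows the paper's proof. Both use Andreief's identity, positivity of both generalized Vandermonde factors on each ordered chamber via Lemma~\ref{lem:vandermonde}, localization to disjoint neighborhoods of $r\le m$ support points for strictness, and cancellation of the two reversal signs $(-1)^{r(r-1)/2}$ in the decreasing case. The only difference is technical: you justify Andreief directly for the general measure, since the $(r!)^2$ expanded terms are positive with finite integrals $\prod_k H(a_{\sigma(k)}+b_{\tau(k)})$, whereas the paper first treats finite compactly supported $\rho$ and passes to general $\rho$ by truncation and continuity of determinants. Your version is slightly cleaner, because it gives nonnegativity and strictness for the original measure in one step.
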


\begin{proof}
First suppose that \(\rho\) is finite and compactly supported.  Andreief's identity \cite{Andreief1886,Forrester2019} gives
\begin{equation}
\det[H(\alpha(x_i)+\beta(y_j))]_{i,j=1}^m
=
\frac{1}{m!}\int_{\R^m}
\det[e^{\alpha(x_i)u_k}]_{i,k=1}^m
\det[e^{\beta(y_j)u_k}]_{j,k=1}^m
\prod_{k=1}^m\dd\rho(u_k).
\end{equation}
After sorting \(u_1,\ldots,u_m\), repeated support values give zero alternating determinants.  On every chamber with distinct ordered values, the two determinants have the same sign.  If \(\alpha,\beta\) are both increasing, both signs are positive by the exponential Chebyshev argument used in \cref{lem:vandermonde}.  If both are decreasing, both determinants acquire the same sign \((-1)^{m(m-1)/2}\), and the product is again positive.  This proves nonnegativity.  Strictness follows by choosing \(m\) disjoint
intervals of positive \(\rho\)-mass and localizing the integral to their
product, where the determinant product is strictly positive.

For a general positive Borel measure, restrict first to a fixed finite ordered
subdomain of the moment interval.  Approximate \(\rho\) by its restrictions to
compact sets on which all entries in the relevant determinant are finite.  The
truncated determinants are nonnegative.  Entrywise convergence of the moment
matrix, followed by continuity of the determinant, gives total
nonnegativity.  Strictness follows directly from the same localization argument
applied to the original measure.
\end{proof}

The Mellin and Laplace forms used below are positive-axis specializations of \cref{thm:exp-transport}.  They are stated separately because most applications arrive in Mellin or gamma-normalized form.

\begin{theorem}[Mellin moment kernels]\label{thm:mellin}
Let \(\mu\) be a positive Borel measure on \((0,\infty)\), and define
\begin{equation}\label{eq:M-def}
        M(z)=\int_0^\infty t^z\,\dd\mu(t)
\end{equation}
where the moments are finite on the relevant interval.  Then the Hankel kernel
\begin{equation}
        K(x,y)=M(x+y)
\end{equation}
 is \(\TN_\infty\).  If \(\mu\) has at least \(m\) distinct support points, then
\begin{equation}\label{eq:M-strict}
        \det[M(x_i+y_j)]_{i,j=1}^m>0
\end{equation}
for every \(x_1<\cdots<x_m\), \(y_1<\cdots<y_m\), provided all sums \(x_i+y_j\) lie in the moment domain.  If \(\mu\) has infinite support, then \(K\) is \(\STP_\infty\) on every ordered subdomain where the required moments are finite.
\end{theorem}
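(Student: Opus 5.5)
The plan is to derive the statement from \cref{thm:exp-transport} by the logarithmic change of variables \(u=\log t\). Let \(\rho\) be the push-forward of \(\mu\) under \(t\mapsto\log t\), a positive Borel measure on \(\R\). Then for every \(z\) in the moment domain
\begin{equation*}
        M(z)=\int_0^\infty e^{z\log t}\,\dd\mu(t)=\int_{\R}e^{zu}\,\dd\rho(u)=H(z),
\end{equation*}
with both sides finite or infinite together. The set of \(z\) where this is finite is an interval \(I\): by Hölder's inequality (log-convexity of \(z\mapsto\int t^z\,\dd\mu\)), finiteness at two exponents implies finiteness at every exponent between them. I would take \(X,Y\) to be the relevant ordered subsets of \(\R\) and \(\alpha(x)=x\), \(\beta(y)=y\), both strictly increasing, so that \(K(x,y)=M(x+y)=H(\alpha(x)+\beta(y))\).

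The support condition transfers because \(t\mapsto\log t\) is a homeomorphism \((0,\infty)\to\R\). So \(\operatorname{supp}\rho=\log(\operatorname{supp}\mu)\): distinct support points of \(\mu\) correspond bijectively to distinct support points of \(\rho\), and \(\rho\) has infinite support exactly when \(\mu\) does. With this in hand, the three conclusions follow directly from \cref{thm:exp-transport}:
\begin{itemize}
\item total nonnegativity of \(K\) (\(\TN_\infty\));
\item the strict inequality \eqref{eq:M-strict} when \(\mu\) has at least \(m\) support points;
\item \(\STP_\infty\) when \(\mu\) has infinite support.
\end{itemize}
In each case we work on any finite ordered configuration whose sums \(x_i+y_j\) lie in the moment domain.

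The only delicate point is that \cref{thm:exp-transport} is stated for a subdomain on which \(\alpha(x)+\beta(y)\in I\). The Mellin statement, by contrast, only assumes that the particular sums \(x_i+y_j\) appearing in a given minor lie in the moment domain. Fixing one minor, I would take \(X=\{x_1,\dots,x_m\}\) and \(Y=\{y_1,\dots,y_m\}\) as the ordered subdomain. The hypothesis of \cref{thm:exp-transport} then holds exactly, and its proof (Andreief's identity after truncating \(\rho\) to compact sets, then passing to the limit entrywise) applies verbatim to that finite configuration.

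For strictness, I would also check that the localization argument only uses \(m\) disjoint intervals of positive \(\rho\)-mass. These exist once \(\rho\) has \(m\) distinct support points, by taking small disjoint neighbourhoods of those points.
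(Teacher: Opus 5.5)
Your argument is correct and is essentially the paper's. The paper re-runs Andreief's identity directly in Mellin coordinates, using the generalized Vandermonde signs from \cref{lem:vandermonde}, compact truncations \(\mu|_{[1/N,N]}\), and localization on \(m\) disjoint intervals of positive mass. You instead make the substitution \(u=\log t\) once, at the level of measures, and quote \cref{thm:exp-transport}, which the paper itself calls the general form of the Mellin statement.

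The checks you add make this reduction rigorous: the finiteness set is an interval by H\"older, \(\operatorname{supp}\rho=\log(\operatorname{supp}\mu)\) because \(\log\) is a homeomorphism, and the finite node sets can serve as the ordered subdomain.
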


\begin{proof}
First suppose that \(\mu\) is finite and compactly supported in \((0,\infty)\).  Andreief's identity gives
\begin{align}\label{eq:andreief-proof}
\det[M(x_i+y_j)]_{i,j=1}^m
&=\frac{1}{m!}\int_{(0,\infty)^m}
    \det[t_k^{x_i}]_{i,k=1}^m
    \det[t_k^{y_j}]_{j,k=1}^m
    \prod_{k=1}^m\dd\mu(t_k).
\end{align}
After sorting the variables, repeated values contribute zero to the alternating determinants, while on every chamber with distinct ordered values both generalized Vandermonde determinants have the same positive sign by \cref{lem:vandermonde}.  This gives nonnegativity.  If \(\mu\) has at least \(m\) support points, choose disjoint intervals \(J_1<\cdots<J_m\) of positive \(\mu\)-mass.  On a sufficiently small product \(J_1\times\cdots\times J_m\), the product of the two generalized Vandermonde determinants is bounded below by a positive constant, so the integral is strictly positive.

For a general positive Borel measure, restrict first to a fixed finite ordered
subdomain of the moment interval and apply the preceding argument to compact
truncations, for instance to \(\mu_N=\mu|_{[1/N,N]}\) when this exhausts the
relevant support.  The truncated determinants are nonnegative.  Entrywise
convergence of the moment matrix follows from monotone convergence of the
entries, and continuity of determinants then gives total nonnegativity.  The
same interval-localization argument gives strictness whenever the original
support contains \(m\) distinct points.
\end{proof}

\begin{corollary}[Order-moment recognition]\label{cor:order-moment}
Let \(E\) be a measurable space, let \(\mu\) be a positive measure on \(E\), and let \(\phi:E\to\R\) be measurable.  Suppose
\begin{equation}\label{eq:order-moment-general}
        F(\theta)=\int_E e^{\theta\phi(t)}\,\dd\mu(t)
\end{equation}
 is finite on an interval \(I\).  Then \((a,b)\mapsto F(a+b)\) is \(\TN_\infty\) on every ordered subdomain with \(a+b\in I\).  If the push-forward of \(\mu\) under \(t\mapsto e^{\phi(t)}\) has at least \(m\) support points, then the kernel is \(\STP_m\); if the push-forward has infinite support, it is \(\STP_\infty\).
\end{corollary}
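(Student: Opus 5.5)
The plan is to reduce \cref{cor:order-moment} to \cref{thm:mellin} by a push-forward. Define \(\psi:E\to(0,\infty)\) by \(\psi(t)=e^{\phi(t)}\); it is measurable since \(\phi\) is. Let \(\nu=\psi_*\mu\) be the push-forward, a positive Borel measure on \((0,\infty)\). The change-of-variables formula for push-forward measures holds for nonnegative measurable integrands without any finiteness hypothesis. Applied to \(u\mapsto u^\theta\), it gives
\begin{equation*}
        F(\theta)=\int_E \bigl(e^{\phi(t)}\bigr)^{\theta}\,\dd\mu(t)=\int_0^\infty u^\theta\,\dd\nu(u),
\end{equation*}
with both sides finite exactly when one is, so in particular for \(\theta\in I\). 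Thus \(F\) coincides on \(I\) with the Mellin moment function \(M\) of \eqref{eq:M-def} built from \(\nu\). The moment domain of \(\nu\) contains \(I\).

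Next I apply \cref{thm:mellin} to \(\nu\) with \(x=a\) and \(y=b\) on any ordered subdomain where all sums \(a_i+b_j\) lie in \(I\). This yields total nonnegativity of \((a,b)\mapsto F(a+b)\) of every order, i.e. \(\TN_\infty\). The strictness clauses transfer verbatim, since they are phrased in terms of the support of the push-forward \(\nu\), which is exactly the measure in \cref{thm:mellin}. At least \(m\) support points gives \eqref{eq:M-strict}, hence \(\STP_m\), and infinite support gives \(\STP_\infty\).

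Alternatively, one could invoke \cref{thm:exp-transport} with \(\rho=\phi_*\mu\) on \(\R\) and \(\alpha,\beta\) the identity maps. The support of \(\rho\) corresponds bijectively to that of \(\nu\) under the homeomorphism \(u\mapsto e^u\), so the counts of support points agree. I will mention this as a remark.

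The only point requiring care is the change of variables when \(\mu\) is not \(\sigma\)-finite or \(F\) is infinite off \(I\). This is handled because the push-forward identity holds for nonnegative integrands in full generality. I would also check that ``support'' of \(\nu\) is meaningful: \(\nu\) is a Borel measure on the second-countable space \((0,\infty)\), so its support is well defined. Finally, the interval-localization argument in \cref{thm:mellin} only uses that every neighbourhood of a support point has positive mass, and that holds for \(\nu\) by definition of support.
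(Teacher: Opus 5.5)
Your proposal is correct and matches the paper's proof: you push \(\mu\) forward under \(t\mapsto e^{\phi(t)}\) to obtain a Mellin moment representation and invoke \cref{thm:mellin}, and the strictness clauses transfer because they are stated in terms of the support of the push-forward. Your measure-theoretic additions are sound details that the paper leaves implicit: the change of variables for nonnegative integrands, and the well-definedness of the support of \(\nu\), which is also locally finite because \(F\) is finite at some point of \(I\).
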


\begin{proof}
Push \(\mu\) forward by \(u=e^{\phi(t)}\).  Then \(F(\theta)=\int_0^\infty u^\theta\,\dd\nu(u)\), and \cref{thm:mellin} applies.
\end{proof}

\begin{corollary}[Laplace-Hankel total positivity]\label{cor:laplace}
Let
\begin{equation}\label{eq:laplace-model}
        F(a)=\int_{[0,\infty)}e^{-az}\,\dd\nu(z)
\end{equation}
be finite on an interval \(I\).  Then \((a,b)\mapsto F(a+b)\) is \(\TN_\infty\) on every ordered subdomain where \(a+b\in I\).  If \(\nu\) has at least \(m\) distinct support points, then the kernel is \(\STP_m\).  If \(\nu\) has infinite support, the kernel is \(\STP_\infty\).
\end{corollary}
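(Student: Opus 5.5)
The plan is to reduce \cref{cor:laplace} to \cref{thm:mellin} (equivalently, to \cref{cor:order-moment}) by a push-forward that turns the Laplace variable into a Mellin base. Define \(\pi:[0,\infty)\to(0,1]\) by \(\pi(z)=e^{-z}\) and set \(\mu=\pi_\#\nu\). For every \(\theta\in I\) we then have
\begin{equation*}
        F(\theta)=\int_{[0,\infty)}e^{-\theta z}\,\dd\nu(z)=\int_{(0,1]}t^{\theta}\,\dd\mu(t),
\end{equation*}
which is the moment function \(M\) of \eqref{eq:M-def} for a positive Borel measure \(\mu\) on \((0,\infty)\). Its moments are finite exactly on \(I\). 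Equivalently, we may apply \cref{cor:order-moment} directly with \(E=[0,\infty)\) and \(\phi(z)=-z\).

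The second step invokes \cref{thm:mellin}. It gives that \((a,b)\mapsto M(a+b)=F(a+b)\) is \(\TN_\infty\) on every ordered subdomain where all sums \(a_i+b_j\) lie in \(I\). For the strictness statements, note that \(\pi\) is a bijection of \([0,\infty)\) onto \((0,1]\). Hence distinct support points of \(\nu\) map to distinct support points of \(\mu\), and the number of support points is preserved, whether finite with at least \(m\) points or infinite. With \(m\) support points, \cref{thm:mellin} then yields \(\STP_m\), and with infinite support it yields \(\STP_\infty\).

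The only point needing care is whether \(\operatorname{supp}\mu=\pi(\operatorname{supp}\nu)\). Since \(\pi\) is a homeomorphism onto \((0,1]\), and \(\mu\) of any open neighborhood of \(\pi(z_0)\) in \((0,\infty)\) equals \(\nu\) of an open neighborhood of \(z_0\), this equality holds. The atom at \(z=0\) is harmless: it maps to \(t=1\), which is still inside \((0,\infty)\). As an alternative, one could invoke \cref{thm:exp-transport} directly with \(\rho\) the push-forward of \(\nu\) under \(z\mapsto -z\) and \(\alpha,\beta\) the identity maps; this gives the same conclusion.
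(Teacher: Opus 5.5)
Your proposal is correct and matches the paper's proof. The paper obtains the corollary from \cref{thm:exp-transport} with $\rho=\nu$ and the two decreasing order maps $\alpha(a)=\beta(a)=-a$, and it names as an equivalent route exactly your push-forward by $z\mapsto e^{-z}$ into $(0,1]$ followed by \cref{cor:order-moment}; your check that $\operatorname{supp}\mu=\pi(\operatorname{supp}\nu)$, so that support counts transfer, is a detail the paper leaves implicit.
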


\begin{proof}
This is \cref{thm:exp-transport} with \(u=z\), \(\rho=\nu\), and \(\alpha(a)=\beta(a)=-a\).  The two order maps are both strictly decreasing, so their orientation signs cancel in the Andreief determinant product.  Equivalently, push \(\nu\) forward by \(z\mapsto e^{-z}\in(0,1]\) and apply \cref{cor:order-moment}.
\end{proof}

\begin{remark}[Classical status]
Theorems~\ref{thm:exp-transport} and~\ref{thm:mellin} are standard
consequences of the total-positivity machinery: Andreief's identity reduces
the minors to products of generalized Vandermonde determinants.  Here they make the later normalization arguments self-contained and track
strictness through support size.
\end{remark}

\section{Normalization calculus}

Special-function formulas introduce normalizations, and the recognition step must survive them.  The following elementary rules will be used repeatedly.

\begin{proposition}[Safe transformations]\label{prop:safe-transformations}
Let \(K:X\times Y\to(0,\infty)\) be \(\STP_m\).
\begin{enumerate}[label=\textup{(\roman*)},leftmargin=2em]
\item If \(\alpha:X\to(0,\infty)\) and \(\beta:Y\to(0,\infty)\), then \(\widetilde K(x,y)=\alpha(x)\beta(y)K(x,y)\) is \(\STP_m\).
\item If \(\varphi:\widetilde X\to X\) and \(\psi:\widetilde Y\to Y\) are strictly increasing maps, then \((u,v)\mapsto K(\varphi(u),\psi(v))\) is \(\STP_m\) on \(\widetilde X\times\widetilde Y\).
\item If \(F(\theta)=M(c\theta+d)\), where \(c>0\), and \((a,b)\mapsto M(a+b)\) is \(\STP_m\), then \((a,b)\mapsto F(a+b)\) is \(\STP_m\) after restricting to the common moment domain.
\item Pointwise limits of \(\TN_m\) kernels are \(\TN_m\), whenever all entries converge.  Strictness may be lost in the limit.
\end{enumerate}
\end{proposition}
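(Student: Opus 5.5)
The four parts are independent, and each reduces to an elementary determinant identity. I will treat them in order, working throughout with a fixed size \(r\leq m\), fixed points \(x_1<\cdots<x_r\) in \(X\), and fixed points \(y_1<\cdots<y_r\) in \(Y\).

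For (i), multilinearity of the determinant in rows and in columns gives
\begin{equation}
\det[\alpha(x_i)\beta(y_j)K(x_i,y_j)]_{i,j=1}^r=\Big(\prod_{i=1}^r\alpha(x_i)\Big)\Big(\prod_{j=1}^r\beta(y_j)\Big)\det[K(x_i,y_j)]_{i,j=1}^r .
\end{equation}
The prefactor is strictly positive, so the sign of every minor is preserved. Strictness therefore passes from \(K\) to \(\widetilde K\), and nonnegativity would pass equally well.

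For (ii), take \(u_1<\cdots<u_r\) in \(\widetilde X\) and \(v_1<\cdots<v_r\) in \(\widetilde Y\). Strict monotonicity gives \(\varphi(u_1)<\cdots<\varphi(u_r)\) and \(\psi(v_1)<\cdots<\psi(v_r)\). Hence the minor of the composed kernel is literally a minor of \(K\) at strictly increasing arguments, and it is positive by hypothesis.

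Part (iii) follows from (ii) and (i). Write
\begin{equation}
F(a+b)=M(c(a+b)+d)=M\big(\varphi(a)+\psi(b)\big),\qquad \varphi(a)=ca+\tfrac d2,\quad \psi(b)=cb+\tfrac d2 .
\end{equation}
Since \(c>0\), both maps are strictly increasing. The kernel \((s,t)\mapsto M(s+t)\) is \(\STP_m\) on the moment domain, so applying (ii) with these affine maps gives the result once we restrict to pairs \((a,b)\) for which \(c(a+b)+d\) lies in that domain. This restriction is the only point that needs care: the ordered subdomain must be chosen so that every sum \(x_i+y_j\) maps into the region where \(M\) is defined, which is exactly what ``common moment domain'' means.

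For (iv), suppose \(K_n\to K\) pointwise. For each fixed \(r\leq m\) and fixed ordered points, the minor \(\det[K_n(x_i,y_j)]\) is a polynomial in finitely many entries. It therefore converges to \(\det[K(x_i,y_j)]\), and a limit of nonnegative numbers is nonnegative, so \(K\) is \(\TN_m\).

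To show strictness can be lost, I would give a one-line example. Take \(K_n(x,y)=e^{xy/n}\), which is \(\STP_\infty\) by \cref{lem:vandermonde} with \(t_j=e^{y_j/n}\). It converges to the constant kernel \(1\), whose \(2\times2\) minors vanish.

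I see no real obstacle in any part. The only step needing attention is the domain bookkeeping in (iii).
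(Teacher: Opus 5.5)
Your proposal is correct and follows the paper's proof essentially step for step. The paper likewise uses the positive row–column prefactor for (i), reindexing of increasing nodes for (ii), the split $a\mapsto ca+d/2$, $b\mapsto cb+d/2$ for (iii), and continuity of determinants under entrywise convergence for (iv). Your explicit example $e^{xy/n}\to 1$ is a small addition: it illustrates the loss of strictness, which the paper only asserts.
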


\begin{proof}
The first assertion multiplies every minor by
\begin{equation}
        \prod_i\alpha(x_i)\prod_j\beta(y_j)>0.
\end{equation}
The second assertion only reindexes increasing nodes.  For the third assertion,
\begin{equation}
        F(a+b)=M(ca+cb+d)=M\big((ca+d/2)+(cb+d/2)\big),
\end{equation}
and the maps \(a\mapsto ca+d/2\), \(b\mapsto cb+d/2\) are strictly increasing when \(c>0\).  The fourth assertion follows from continuity of determinants under entrywise convergence.
\end{proof}

A particularly important normalization is gamma averaging.  It converts a completely monotone multiplier into a Laplace transform in the shape parameter.

\begin{theorem}[Gamma-normalized exponent extraction]\label{thm:gamma-cm}
Let \(g:(0,\infty)\to[0,\infty)\) be completely monotone, and let \(\mu\) be its Bernstein representing measure:
\begin{equation}\label{eq:bernstein}
        g(y)=\int_{[0,\infty)}e^{-sy}\,\dd\mu(s).
\end{equation}
The existence and uniqueness of this representing measure are the Bernstein--Widder theorem \cite{Widder1946,SchillingSongVondracek2012}.
Define the finite moment domain
\begin{equation}\label{eq:I-mu}
        I_\mu=\left\{a>0:\int_{[0,\infty)}(1+s)^{-a}\,\dd\mu(s)<\infty\right\}
\end{equation}
 and, for \(a\in I_\mu\), set
\begin{equation}\label{eq:Fg-def}
        F_g(a)=\frac{1}{\Gamma(a)}\int_0^\infty y^{a-1}e^{-y}g(y)\,\dd y.
\end{equation}
Then
\begin{equation}\label{eq:Fg-laplace}
        F_g(a)=\int_{[0,\infty)}(1+s)^{-a}\,\dd\mu(s)
        =\int_{[0,\infty)}e^{-a\log(1+s)}\,\dd\mu(s).
\end{equation}
Consequently \((a,b)\mapsto F_g(a+b)\) is \(\TN_\infty\) on ordered subdomains where \(a+b\in I_\mu\).  If the push-forward of \(\mu\) under \(s\mapsto(1+s)^{-1}\) has at least \(m\) support points, then the kernel is \(\STP_m\); if the push-forward has infinite support, it is \(\STP_\infty\).
\end{theorem}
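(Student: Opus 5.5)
The proof has two parts: the identity \eqref{eq:Fg-laplace}, and the determinant consequences, which reduce to \cref{cor:order-moment} (equivalently \cref{cor:laplace}) after a push-forward.

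\emph{The identity.} Substitute the Bernstein representation \eqref{eq:bernstein} into \eqref{eq:Fg-def}. Since every integrand is nonnegative, Tonelli's theorem lets us swap the order of integration without any integrability hypothesis:
\begin{equation*}
\frac{1}{\Gamma(a)}\int_0^\infty y^{a-1}e^{-y}\int_{[0,\infty)}e^{-sy}\,\dd\mu(s)\,\dd y
=\int_{[0,\infty)}\frac{1}{\Gamma(a)}\int_0^\infty y^{a-1}e^{-(1+s)y}\,\dd y\,\dd\mu(s).
\end{equation*}
For \(a>0\) and \(s\ge0\), the substitution \(y\mapsto y/(1+s)\) evaluates the inner integral as \(\Gamma(a)(1+s)^{-a}\). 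This gives \eqref{eq:Fg-laplace} as an identity in \([0,\infty]\). In particular, \(F_g(a)\) is finite exactly when \(a\in I_\mu\), so the definition of \(I_\mu\) is the natural one. I would also note that \(I_\mu\) is an interval: if \(a<a'\) and \(a\in I_\mu\), then \((1+s)^{-a'}\le(1+s)^{-a}\), so \(a'\in I_\mu\). Hence \(I_\mu\) is an up-set in \((0,\infty)\).

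\emph{Total positivity.} Write \(F_g(\theta)=\int e^{\theta\phi(s)}\,\dd\mu(s)\) with \(\phi(s)=-\log(1+s)\). This is exactly the form \eqref{eq:order-moment-general}, finite on the interval \(I_\mu\). \cref{cor:order-moment} then gives \(\TN_\infty\) for \((a,b)\mapsto F_g(a+b)\) on ordered subdomains with \(a+b\in I_\mu\). Since \(e^{\phi(s)}=(1+s)^{-1}\), the push-forward there is precisely the push-forward of \(\mu\) under \(s\mapsto(1+s)^{-1}\). Therefore \(m\) support points yield \(\STP_m\), and infinite support yields \(\STP_\infty\).

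Two small points need checking. First, the push-forward lives in \((0,1]\subset(0,\infty)\), so \cref{thm:mellin} applies; an atom of \(\mu\) at \(s=0\) becomes an atom at \(1\), which is harmless. Second, because \(s\mapsto(1+s)^{-1}\) is injective, the number of support points of the push-forward equals that of \(\mu\). This is not needed for the statement as phrased, but it clarifies it.

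The main obstacle is the measure-theoretic bookkeeping rather than any determinant work. One must justify Tonelli with possibly infinite values, and make sure that the truncation and limit argument in \cref{thm:mellin} (monotone convergence of the entries) is only invoked on nodes where all the sums \(a_i+b_j\) lie in \(I_\mu\). Nonnegativity of everything makes both steps routine.
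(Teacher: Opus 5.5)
Your proof is correct and matches the paper's: the same Tonelli interchange and evaluation of the gamma integral give \eqref{eq:Fg-laplace}. The paper then cites \cref{cor:laplace}, whereas you apply \cref{cor:order-moment} with \(\phi(s)=-\log(1+s)\); the paper itself notes in the proof of \cref{cor:laplace} that these two routes are equivalent, and your check that \(I_\mu\) is an up-set (hence an interval, as both corollaries require) is a small point the paper leaves implicit.
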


\begin{proof}
By Tonelli's theorem,
\begin{align}
F_g(a)
&=\frac{1}{\Gamma(a)}\int_0^\infty y^{a-1}e^{-y}
        \int_{[0,\infty)}e^{-sy}\,\dd\mu(s)\,\dd y \\
&=\int_{[0,\infty)}\frac{1}{\Gamma(a)}\int_0^\infty y^{a-1}e^{-(1+s)y}\,\dd y\,\dd\mu(s) \\
&=\int_{[0,\infty)}(1+s)^{-a}\,\dd\mu(s).
\end{align}
The total-positivity statement follows from \cref{cor:laplace}.
\end{proof}

\begin{corollary}[All Tur\'an determinants]\label{cor:all-turan}
Under the hypotheses of \cref{thm:gamma-cm}, for every \(m\geq1\) for which the representing measure has at least \(m\) distinct pushed-forward support points,
\begin{equation}\label{eq:all-turan}
        \det[F_g(a_i+b_j)]_{i,j=1}^m>0
\end{equation}
whenever \(a_1<\cdots<a_m\), \(b_1<\cdots<b_m\), and all sums lie in \(I_\mu\).  In particular, if the pushed-forward measure has at least two support points, then \(F_g\) is strictly log-convex.  If the pushed-forward measure has exactly one support point, then \(\log F_g\) is affine on its domain.
\end{corollary}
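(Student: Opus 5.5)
The plan is to read off all three assertions from the Laplace representation \eqref{eq:Fg-laplace} in \cref{thm:gamma-cm}. For the determinant inequality \eqref{eq:all-turan}, push \(\mu\) forward by \(s\mapsto(1+s)^{-1}\in(0,1]\) to a positive measure \(\nu\). Then
\[
F_g(a)=\int_{(0,1]}t^{a}\,\dd\nu(t),
\]
and this is a Mellin moment in the sense of \eqref{eq:M-def}. If \(\nu\) has at least \(m\) support points, \cref{thm:mellin} gives \(\det[F_g(a_i+b_j)]>0\) for increasing nodes with all sums in \(I_\mu\). We only need to check that \(I_\mu\) is an interval, so that the theorem's moment domain hypothesis is met. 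This holds because \((1+s)^{-a}\) is decreasing in \(a\), hence finiteness at \(a\) implies finiteness at every \(a'>a\).

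For strict log-convexity, I would specialize to \(m=2\). Given \(a<c\) in \(I_\mu\), set \(h=(c-a)/2\) and take nodes \(a_1=a/2,\ a_2=a/2+h\) and \(b_1=a/2,\ b_2=a/2+h\). The \(2\times2\) determinant is
\[
F_g(a)F_g(c)-F_g\big(\tfrac{a+c}{2}\big)^2>0,
\]
which gives strict midpoint log-convexity. To pass to full strict log-convexity, I would observe that \(\log F_g\) is continuous, by dominated convergence on the interior of \(I_\mu\), with domination from a slightly smaller exponent. A continuous, midpoint strictly convex function is strictly convex: midpoint convexity together with continuity gives convexity, and if strict convexity failed, \(\log F_g\) would be affine on some segment, contradicting strict midpoint inequality there.

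Alternatively, strict convexity follows from Cauchy--Schwarz applied to \(t^{a/2}\) and \(t^{c/2}\) in \(L^2(\nu)\). Equality would force \(t^{(c-a)/2}\) to be constant \(\nu\)-a.e., which is impossible when \(\nu\) has two support points. I would present this route, since it avoids the regularity discussion entirely.

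For the degenerate case, if \(\nu=\kappa\delta_{t_0}\) with \(\kappa>0\) and \(t_0\in(0,1]\), then \(F_g(a)=\kappa t_0^{a}\), and \(\log F_g(a)=\log\kappa+a\log t_0\) is affine. The main point needing care is the boundary behaviour of the domain \(I_\mu\), specifically whether endpoint values are finite. Here the Cauchy--Schwarz argument works pointwise for any \(a,c\) in \(I_\mu\) with finite moments, so endpoints cause no difficulty.
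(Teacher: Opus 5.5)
Your proposal is correct and follows the paper's route: the determinant inequality is the \(\STP_m\) conclusion of \cref{thm:gamma-cm} (equivalently \cref{thm:mellin} after pushing \(\mu\) forward to \((0,1]\)), strict log-convexity is the \(2\times2\) midpoint minor as in the paper's closing discussion, and a single atom gives \(F_g(a)=\kappa t_0^{a}\). One small correction: Cauchy--Schwarz on \(t^{a/2},t^{c/2}\) gives only the midpoint inequality, so it does not by itself remove the continuity step; to remove that step, use H\"older with exponents \(1/\lambda\) and \(1/(1-\lambda)\), whose equality case (\(t^{c-a}\) constant \(\nu\)-a.e.) is ruled out by two support points exactly as you argue.
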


\begin{remark}[Why the gamma factor matters]
The factor \(\Gamma(a)^{-1}\) is structural.  It converts the
gamma density \(y^{a-1}e^{-y}/\Gamma(a)\) into the identity
\begin{equation}\label{eq:gamma-density-laplace}
        \mathbb{E}_{Y\sim\Gamma(a,1)}[e^{-sY}]=(1+s)^{-a}.
\end{equation}
Thus a Laplace variable \(s\) becomes a Mellin base \((1+s)^{-1}\), and the
shape parameter \(a\) becomes the moment exponent.  This is the mechanism used
in the Mills and Coulomb applications.
\end{remark}

\section{The Jackson \(q\)-gamma Hankel kernel}
\label{sec:qgamma}

The Mellin criterion also settles an atomic source problem.  Karp,
Vishnyakova, and Zhang recorded two separate conjectural statements for the
Jackson \(q\)-gamma function: strict total positivity of the positive Hankel
kernel \((x,y)\mapsto\Gamma_q(x+y)\), and strict sign-regularity of the
reciprocal kernel \((x,y)\mapsto1/\Gamma_q(x+y)\)
\cite[Section~3, items~7--8]{KarpVishnyakovaZhang2025}.  The theorem below
proves only the positive Hankel statement.  The proof is a direct
support-size application of \cref{thm:mellin}.

For \(0<q<1\), write
\begin{equation}\label{eq:qpochhammer-def}
        (a;q)_n=\prod_{j=0}^{n-1}(1-aq^j),
        \qquad
        (a;q)_\infty=\prod_{j=0}^{\infty}(1-aq^j).
\end{equation}
The Jackson \(q\)-gamma function is \cite[Section~5.18(ii)]{DLMF}
\begin{equation}\label{eq:qgamma-def}
        \Gamma_q(z)=(1-q)^{1-z}\frac{(q;q)_\infty}{(q^z;q)_\infty},
        \qquad z>0 .
\end{equation}

\begin{theorem}[Jackson \(q\)-gamma Hankel determinants]\label{thm:qgamma}
For each fixed \(0<q<1\), the kernel
\begin{equation}\label{eq:qgamma-kernel}
        K_q(x,y)=\Gamma_q(x+y)
\end{equation}
is \(\STP_\infty\) on \((0,\infty)^2\).  Equivalently, for every \(m\geq1\),
\begin{equation}\label{eq:qgamma-all-minors}
        \det\big[\Gamma_q(x_i+y_j)\big]_{i,j=1}^m>0
\end{equation}
whenever \(0<x_1<\cdots<x_m\) and \(0<y_1<\cdots<y_m\).
\end{theorem}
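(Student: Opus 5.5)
We will realize \(\Gamma_q(z)\), for \(z>0\), as a Mellin moment of a positive purely atomic measure on \((0,\infty)\) with infinitely many atoms. Then \cref{thm:mellin} gives the statement directly.

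The first step is the \(q\)-binomial theorem. For \(|t|<1\) and \(|q|<1\),
\begin{equation*}
\frac{(at;q)_\infty}{(t;q)_\infty}=\sum_{n\ge0}\frac{(a;q)_n}{(q;q)_n}t^n .
\end{equation*}
Take \(a=q\) and \(t=q^z\). For \(z>0\) this gives
\begin{equation*}
\frac{(q;q)_\infty}{(q^z;q)_\infty}
=\frac{(q^{z+1};q)_\infty}{(q^z;q)_\infty}\cdot\frac{(q;q)_\infty}{(q^{z+1};q)_\infty}.
\end{equation*}
The first factor is simply \(1/(1-q^z)\), which is not the cleanest route. It is better to use directly
\begin{equation*}
\frac{1}{(q^z;q)_\infty}=\sum_{n\ge0}\frac{q^{nz}}{(q;q)_n},
\end{equation*}
which is Euler's identity, valid since \(0<q^z<1\).

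Inserting this into \eqref{eq:qgamma-def} yields
\begin{equation*}
\Gamma_q(z)=(1-q)(q;q)_\infty\sum_{n\ge0}\frac{1}{(q;q)_n}\Big(\frac{q^n}{1-q}\Big)^{z}.
\end{equation*}
Here \((1-q)^{1-z}\) has been split as \((1-q)\cdot(1-q)^{-z}\), and the factor \((1-q)^{-z}\) has been absorbed into the base. Thus \(\Gamma_q(z)=\int_0^\infty t^z\,\dd\mu(t)\), where
\begin{equation*}
\mu=(1-q)(q;q)_\infty\sum_{n\ge0}\frac{1}{(q;q)_n}\,\delta_{t_n},
\qquad t_n=\frac{q^n}{1-q}.
\end{equation*}
The weights are strictly positive because \((q;q)_n>0\) and \((q;q)_\infty>0\) for \(0<q<1\). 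The atoms \(t_n\) are pairwise distinct and positive, so \(\mu\) has infinite support.

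Next we check finiteness of the moments. Since \((q;q)_n\ge(q;q)_\infty>0\), the series is dominated by a constant times \(\sum_n (1-q)^{-z}q^{nz}\), which converges for every \(z>0\). So the moment domain contains \((0,\infty)\), and every sum \(x_i+y_j\) with \(x_i,y_j>0\) lies in it.

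Finally, \cref{thm:mellin} with infinite support gives \(\STP_\infty\) on \((0,\infty)^2\), which is \eqref{eq:qgamma-all-minors}. The strictness step in that theorem needs \(m\) disjoint intervals of positive mass. For an atomic measure these are small intervals around \(m\) distinct atoms, so the localization argument applies verbatim.

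The only real obstacle is the bookkeeping in the expansion: the factor \((1-q)^{1-z}\) must be handled so that it becomes a \(z\)-th power of a positive base, with no non-factorable dependence on \(z\) left over. Note also that the finiteness domain is the open half-line, reached only as \(z\downarrow0\); this causes no trouble because the nodes \(x_i+y_j\) are strictly positive.
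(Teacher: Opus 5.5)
Your proposal is correct and follows the paper's proof step for step. Both arguments expand $1/(q^z;q)_\infty=\sum_{n\ge0}q^{nz}/(q;q)_n$ by Euler's identity and absorb $(1-q)^{-z}$ into the base, giving the positive atomic measure with atoms $q^n/(1-q)$ and weights $(1-q)(q;q)_\infty/(q;q)_n$; they then bound the moments for $z>0$ via $(q;q)_n\ge(q;q)_\infty>0$ and apply \cref{thm:mellin} with infinite support (your opening detour through the general $q$-binomial theorem with $a=q$ is unnecessary and can simply be dropped).
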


\begin{proof}
By Euler's \(q\)-binomial theorem \cite[Section~1.3]{GasperRahman2004},
\begin{equation}\label{eq:qbinomial-use}
        \frac{1}{(\xi;q)_\infty}
        =\sum_{n=0}^{\infty}\frac{\xi^n}{(q;q)_n},
        \qquad |\xi|<1 .
\end{equation}
Taking \(\xi=q^z\), which satisfies \(|\xi|<1\) for \(z>0\), gives
\begin{equation}\label{eq:qbinomial-qz}
        \frac{1}{(q^z;q)_\infty}
        =\sum_{n=0}^{\infty}\frac{q^{nz}}{(q;q)_n},
        \qquad z>0 .
\end{equation}
Therefore
\begin{equation}\label{eq:qgamma-mellin-series}
        \Gamma_q(z)
        =(1-q)(q;q)_\infty
        \sum_{n=0}^{\infty}
        \frac{(q^n/(1-q))^z}{(q;q)_n}.
\end{equation}
Equivalently,
\begin{equation}\label{eq:qgamma-mellin-measure}
        \Gamma_q(z)=\int_0^\infty t^z\,\dd\mu_q(t),
\end{equation}
where
\begin{equation}\label{eq:qgamma-measure}
        \mu_q=(1-q)(q;q)_\infty
        \sum_{n=0}^{\infty}\frac{1}{(q;q)_n}\,
        \delta_{q^n/(1-q)} .
\end{equation}
This is a positive atomic measure with infinitely many distinct support
points.  Its moments in \eqref{eq:qgamma-mellin-measure} are finite for every
\(z>0\): indeed, \((q;q)_n\downarrow(q;q)_\infty>0\), so the \(n\)-th summand
in \eqref{eq:qgamma-mellin-series} is \(O(q^{nz})\).  Applying
\cref{thm:mellin} with \(X=Y=(0,\infty)\) proves
\eqref{eq:qgamma-all-minors}, hence \(K_q\in\STP_\infty\).
\end{proof}

\begin{remark}[The reciprocal problem is separate]
The same paper of Karp--Vishnyakova--Zhang also conjectures strict
sign-regularity for the reciprocal kernel
\((x,y)\mapsto1/\Gamma_q(x+y)\).  \Cref{thm:qgamma} proves only the positive
Hankel kernel statement for \(\Gamma_q\).  The reciprocal kernel is not a
positive Mellin moment kernel, so it lies outside the order-moment mechanism
used here.
\end{remark}

\section{Mills ratio and a gamma-normalized question}
\label{sec:mills}

Let
\begin{equation}
        R(x)=\int_0^\infty e^{-t^2/2}e^{-xt}\,\dd t,
        \qquad x\geq0,
\end{equation}
be the standard integral form of the Gaussian Mills ratio.  Mills introduced the ratio in the classical Gaussian-tail setting \cite{Mills1926}; monotonicity and functional inequalities for it have been studied extensively, including by Baricz \cite{Baricz2008Mills}.  Yang and Tian recently proved an increasing Tur\'an-type ratio for higher derivatives of the Mills ratio \cite{YangTian2025}.  Yang then introduced the real-order moments
\begin{equation}\label{eq:mills-order-moments}
        R_p(x)=\int_0^\infty t^p e^{-t^2/2}e^{-xt}\,\dd t,
        \qquad x>0,\quad p>-1,
\end{equation}
and observed that the Yang--Tian integer-order inequality implies log-convexity of
\begin{equation}
        n\longmapsto \frac{R_n(x)}{\Gamma(n/2+1/2)}
        \qquad n\in\N.
\end{equation}
He stated the corresponding continuous assertion as a guess on \((-1,\infty)\) \cite[Remark~11]{Yang2025Extension}.  The gamma-normalized transport theorem gives a positive answer and the full Hankel-minor hierarchy.  This application is direct: the half-gamma normalization is exactly the gamma-shape normalization exposed by the change of variables \(y=t^2/2\).

\begin{theorem}[Mills-ratio Hankel determinants]\label{thm:mills-ratio}
Fix \(x>0\), and define
\begin{equation}
        \mathcal M_x(p)=\frac{R_p(x)}{\Gamma((p+1)/2)},
        \qquad p>-1.
\end{equation}
Then the kernel
\begin{equation}\label{eq:mills-hankel-kernel}
        (a,b)\longmapsto \mathcal M_x(a+b+1)
\end{equation}
is \(\STP_\infty\) on \((-1,\infty)^2\).  Consequently \(p\mapsto\mathcal M_x(p)\) is strictly log-convex on \((-1,\infty)\); in particular, for \(-1<p<q\),
\begin{equation}\label{eq:mills-yang-question}
        \mathcal M_x\left(\frac{p+q}{2}\right)^2
        <
        \mathcal M_x(p)\mathcal M_x(q).
\end{equation}
More generally,
\begin{equation}\label{eq:mills-all-minors}
        \det\left[\mathcal M_x(a_i+b_j+1)\right]_{i,j=1}^m>0
\end{equation}
whenever \(-1<a_1<\cdots<a_m\), \(-1<b_1<\cdots<b_m\), and \(m\geq1\).
\end{theorem}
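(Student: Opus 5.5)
Substitute \(y=t^2/2\), so \(t=\sqrt{2y}\), \(\dd t=(2y)^{-1/2}\dd y\), \(t^p=(2y)^{p/2}\). Then
\begin{equation*}
R_p(x)=2^{(p-1)/2}\int_0^\infty y^{(p+1)/2-1}e^{-y}e^{-x\sqrt{2y}}\,\dd y .
\end{equation*}
With \(a=(p+1)/2>0\) and \(g(y)=e^{-x\sqrt{2y}}\), this reads
\begin{equation*}
\mathcal M_x(p)=2^{(p-1)/2}F_g\!\left(\tfrac{p+1}{2}\right),
\end{equation*}
where \(F_g\) is as in \eqref{eq:Fg-def}. The plan is therefore to place \(g\) in the setting of \cref{thm:gamma-cm} and then absorb the affine reparametrization and the exponential prefactor with \cref{prop:safe-transformations}.

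\textbf{Step 1: complete monotonicity of \(g\).} The function \(y\mapsto e^{-c\sqrt{y}}\) with \(c=x\sqrt2>0\) is completely monotone. One can see this either because \(\sqrt{y}\) is a Bernstein function and \(e^{-c\,\cdot}\) composed with a Bernstein function is CM, or explicitly via the one-sided stable-\(1/2\) (Lévy) law:
\begin{equation*}
e^{-c\sqrt{y}}=\int_0^\infty e^{-sy}\,\frac{c}{2\sqrt{\pi}}\,s^{-3/2}e^{-c^2/(4s)}\,\dd s .
\end{equation*}
I would cite the explicit formula, since it also identifies \(\mu\). This \(\mu\) has a positive density on all of \((0,\infty)\), so its push-forward under \(s\mapsto(1+s)^{-1}\) has infinite support (it fills \((0,1)\)). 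Moreover \(I_\mu=(0,\infty)\): \(\mu\) is a finite measure (\(\mu([0,\infty))=g(0^+)=1\)) and \((1+s)^{-a}\le1\), so all moments are finite.

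\textbf{Step 2: apply \cref{thm:gamma-cm}.} This gives that \((\alpha,\beta)\mapsto F_g(\alpha+\beta)\) is \(\STP_\infty\) on ordered subdomains with \(\alpha+\beta>0\). Now compute
\begin{equation*}
\mathcal M_x(a+b+1)=2^{(a+b)/2}\,F_g\!\left(\tfrac{a+1}{2}+\tfrac{b+1}{2}\right).
\end{equation*}
The maps \(a\mapsto(a+1)/2\) and \(b\mapsto(b+1)/2\) are strictly increasing from \((-1,\infty)\) into \((0,\infty)\), so their sum lies in \(I_\mu\). Parts (ii) and (i) of \cref{prop:safe-transformations}, with positive factors \(2^{a/2}\) and \(2^{b/2}\), then give the \(\STP_\infty\) claim and \eqref{eq:mills-all-minors}.

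\textbf{Step 3: strict log-convexity.} Take \(m=2\) with nodes \(a_1<a_2\) and \(b_1=a_1\), \(b_2=a_2\). The resulting minor inequality reads \(\mathcal M_x(p)\mathcal M_x(q)>\mathcal M_x(\tfrac{p+q}{2})^2\) with \(p=2a_1+1\) and \(q=2a_2+1\). As \(a_1<a_2\) range over \((-1,\infty)\), the values \(p<q\) range over \((-1,\infty)\). This proves midpoint strict log-convexity, i.e. \eqref{eq:mills-yang-question}, and continuity of \(\mathcal M_x\) upgrades it to strict log-convexity.

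\textbf{Main obstacle.} The only nonformal input is the Bernstein representation of \(e^{-c\sqrt y}\) together with its full-support property, which is what gives strictness at every order. I would verify the Lévy-density identity directly as a standard Laplace transform, and check the Tonelli hypotheses: everything is nonnegative, so they hold. A minor point is that \(g\) extends continuously to \(y=0\) with \(g(0)=1\), which guarantees finiteness of \(\mu\).
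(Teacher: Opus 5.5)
Your proposal is correct and follows the paper's proof essentially step for step: the substitution \(y=t^2/2\), the L\'evy-density Laplace representation of \(e^{-x\sqrt{2y}}\) (your constants with \(c=x\sqrt2\) match the paper's), \cref{thm:gamma-cm}, the reparametrization \(a\mapsto(a+1)/2\) with the row-column factor \(2^{a/2}2^{b/2}\), and the diagonal \(2\times2\) minor for \eqref{eq:mills-yang-question}. You also check explicitly that \(I_\mu=(0,\infty)\), and that continuity upgrades strict midpoint log-convexity to strict log-convexity; the paper leaves both of these points implicit.
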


\begin{proof}
Set
\begin{equation}
        g_x(y)=e^{-x\sqrt{2y}},
        \qquad y>0.
\end{equation}
The Laplace representation
\begin{equation}\label{eq:sqrt-laplace}
        e^{-x\sqrt{2y}}
        =
        \frac{x}{\sqrt{2\pi}}
        \int_0^\infty s^{-3/2}e^{-x^2/(2s)}e^{-sy}\,\dd s
\end{equation}
shows that \(g_x\) is completely monotone for \(x>0\).  With \(p=2a-1\), the change of variables \(y=t^2/2\) gives
\begin{align}\label{eq:mills-gamma-change}
        R_{2a-1}(x)
        &=2^{a-1}\int_0^\infty y^{a-1}e^{-y}g_x(y)\,\dd y,
        \qquad a>0.
\end{align}
Thus, in the notation of \cref{thm:gamma-cm},
\begin{equation}\label{eq:mills-Fg-link}
        F_{g_x}(a)
        =\frac{1}{\Gamma(a)}\int_0^\infty y^{a-1}e^{-y}g_x(y)\,\dd y
        =2^{1-a}\mathcal M_x(2a-1).
\end{equation}
By \cref{thm:gamma-cm}, \((u,v)\mapsto F_{g_x}(u+v)\) is
\(\STP_\infty\), because the representing measure in
\eqref{eq:sqrt-laplace} has infinite support.  For target variables
\(A,B>-1\), put
\begin{equation}\label{eq:mills-target-affine}
        u=\frac{A+1}{2},\qquad v=\frac{B+1}{2}.
\end{equation}
Then
\begin{equation}\label{eq:mills-target-factor}
        F_{g_x}(u+v)
        =
        2^{-(A+B)/2}\mathcal M_x(A+B+1).
\end{equation}
The maps \(A\mapsto(A+1)/2\) and \(B\mapsto(B+1)/2\) are strictly increasing,
and the factor \(2^{(A+B)/2}=2^{A/2}2^{B/2}\) is a positive row-column
factor.  Hence \((A,B)\mapsto\mathcal M_x(A+B+1)\) is \(\STP_\infty\).

The scalar log-convexity inequality \eqref{eq:mills-yang-question} is the
\(2\times2\) minor obtained by taking
\begin{equation}\label{eq:mills-scalar-minor-choice}
        A_1=B_1=\frac{p-1}{2},
        \qquad
        A_2=B_2=\frac{q-1}{2}.
\end{equation}
\end{proof}

\begin{corollary}[Yang's half-gamma question, all-minor form]
\label{cor:mills-original-notation}
Fix \(x>0\).  If
\begin{equation}\label{eq:mills-original-ordering}
        -1<p_1<\cdots<p_m,
        \qquad
        -1<q_1<\cdots<q_m,
\end{equation}
then
\begin{equation}\label{eq:mills-original-all-minor}
        \det\left[
        \frac{R_{(p_i+q_j)/2}(x)}
        {\Gamma\!\left(\frac{p_i+q_j+2}{4}\right)}
        \right]_{i,j=1}^m>0 .
\end{equation}
In particular, for \(p\neq q\),
\begin{equation}\label{eq:mills-original-logconvex}
        \left(
        \frac{R_{(p+q)/2}(x)}
        {\Gamma\!\left(\frac{p+q+2}{4}\right)}
        \right)^2
        <
        \frac{R_p(x)}{\Gamma((p+1)/2)}
        \frac{R_q(x)}{\Gamma((q+1)/2)} .
\end{equation}
\end{corollary}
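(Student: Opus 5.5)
The plan is to deduce the corollary directly from \cref{thm:mills-ratio}, reindexing the variables via \cref{prop:safe-transformations}(ii).

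First I identify the matrix entries. For any \(P>-1\) we have \(\mathcal M_x(P)=R_P(x)/\Gamma((P+1)/2)\). Taking \(P=(p_i+q_j)/2\), which exceeds \(-1\) because \(p_i,q_j>-1\), gives \((P+1)/2=(p_i+q_j+2)/4\). So the entry in \eqref{eq:mills-original-all-minor} is exactly \(\mathcal M_x\bigl((p_i+q_j)/2\bigr)\).

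Next I match this to the kernel \eqref{eq:mills-hankel-kernel}. I set
\begin{equation*}
A_i=\frac{p_i-1}{2},\qquad B_j=\frac{q_j-1}{2},
\end{equation*}
so that \(A_i+B_j+1=(p_i+q_j)/2\). The maps \(p\mapsto(p-1)/2\) and \(q\mapsto(q-1)/2\) are strictly increasing. They also send \((-1,\infty)\) into \((-1,\infty)\), since \((p-1)/2>-1\) holds exactly when \(p>-3\). Hence \(-1<A_1<\cdots<A_m\) and \(-1<B_1<\cdots<B_m\). The determinant in \eqref{eq:mills-original-all-minor} therefore equals \(\det[\mathcal M_x(A_i+B_j+1)]\), which is positive by \eqref{eq:mills-all-minors}. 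This proves the all-minor statement.

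For the scalar inequality \eqref{eq:mills-original-logconvex}, both sides are symmetric in \(p,q\), so I may assume \(p<q\). I take \(m=2\) with \(p_1=q_1=p\) and \(p_2=q_2=q\). The diagonal entries reduce to \(\mathcal M_x(p)\) and \(\mathcal M_x(q)\), and both off-diagonal entries equal \(\mathcal M_x((p+q)/2)\). Positivity of this \(2\times2\) determinant is exactly the claimed strict inequality.

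I expect no real obstacle. The only points needing care are checking that the affine reindexing preserves the domain \((-1,\infty)\) and the orientation, and that the gamma argument \((P+1)/2\) matches \((p_i+q_j+2)/4\).
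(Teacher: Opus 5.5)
Your proposal is correct and is essentially the paper's proof, which likewise just applies \cref{thm:mills-ratio} with \(a_i=(p_i-1)/2\) and \(b_j=(q_j-1)/2\). There is one small arithmetic slip: \((p-1)/2>-1\) holds exactly when \(p>-1\), not when \(p>-3\), so the reindexing maps \((-1,\infty)\) onto itself and your conclusion \(A_i,B_j>-1\) still stands.
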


\begin{proof}
Apply \cref{thm:mills-ratio} with
\(a_i=(p_i-1)/2\) and \(b_j=(q_j-1)/2\).
\end{proof}

\section{Tricomi rays and the Coulomb regularization}

The third application concerns Tricomi rays and the one-dimensional Coulomb
regularization.  This material is close to existing determinant and Tur\'an
literature, so we state two precise determinant consequences.  First, the ray
\(U(A,A+\delta,z)\) has a full Hankel total-positivity structure in the range
\(\delta<1\).  Second, the Coulomb regularization inherits the
\(\delta=1/2\) case.  The scalar \(2\times2\) consequence overlaps with known
shifted-parameter log-convexity results, and the theorem below supplies the
all-minor \(\STP_\infty\) strengthening.

For \(a>0\), \(z>0\), and real \(c\), Tricomi's confluent hypergeometric function has the integral representation
\begin{equation}\label{eq:U-integral}
        U(a,c,z)=\frac{1}{\Gamma(a)}\int_0^\infty e^{-zt}t^{a-1}(1+t)^{c-a-1}\,\dd t.
\end{equation}
This is Tricomi's standard integral representation \cite[13.4.4]{DLMF}; see also the determinant and Tur\'an literature for special functions and Tricomi functions \cite{IsmailLaforgia2007,BariczIsmail2013,KarpSitnik2010,Yang2025Extension}.
The scalar and \(2\times2\) Tricomi literature is substantial.  Ismail and
Laforgia studied determinant functions involving special functions
\cite{IsmailLaforgia2007}.  Baricz and Ismail proved sharp Tur\'an inequalities
and complete-monotonicity results for Tricomi Tur\'anians
\cite{BariczIsmail2013}.  Yang's shifted-parameter results imply scalar
log-convexity consequences for shifted Tricomi functions
\cite{Yang2025Extension}.  The theorem below gives a Hankel \(\STP_\infty\) upgrade along the special
ray \(c=a+\delta\).

We restrict to the ray
\begin{equation}\label{eq:tricomi-ray}
        c=a+\delta.
\end{equation}

\begin{theorem}[Gamma-weighted Tricomi rays]\label{thm:tricomi-gamma}
Fix \(z>0\) and \(\delta\in\R\).  The kernel
\begin{equation}\label{eq:gamma-weighted-U-kernel}
        (a,b)\longmapsto \Gamma(a+b)U(a+b,a+b+\delta,z)
\end{equation}
 is \(\STP_\infty\) on \((0,\infty)^2\).
\end{theorem}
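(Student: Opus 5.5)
Write \(A=a+b\).  On the ray \(c=A+\delta\), the representation \eqref{eq:U-integral} gives
\begin{equation*}
        \Gamma(A)\,U(A,A+\delta,z)=\int_0^\infty e^{-zt}t^{A-1}(1+t)^{\delta-1}\,\dd t
        =\int_0^\infty \Big(\frac{t}{1}\Big)^{A}\,\dd\mu_{z,\delta}(t),
\end{equation*}
where
\begin{equation*}
        \dd\mu_{z,\delta}(t)=e^{-zt}t^{-1}(1+t)^{\delta-1}\,\dd t
\end{equation*}
is a positive Borel measure on \((0,\infty)\).  The \(\delta\)-dependence sits entirely in the measure and not in the exponent, because \((1+t)^{c-A-1}=(1+t)^{\delta-1}\) on the ray.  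The gamma-weighted kernel \eqref{eq:gamma-weighted-U-kernel} is therefore exactly the Mellin moment function \(M(A)=\int_0^\infty t^A\,\dd\mu_{z,\delta}(t)\) of \cref{thm:mellin}, evaluated at \(A=a+b\).

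The plan is in three steps.
\begin{enumerate}[label=\textup{(\arabic*)},leftmargin=2em]
\item Verify the identity above.  This is immediate from \eqref{eq:U-integral}, which is valid for \(A>0\), \(z>0\) and every real \(c\).
\item Check that the moments are finite for every \(A>0\).  Near \(t=0\) the integrand behaves like \(t^{A-1}\), which is integrable since \(A>0\), and the factor \((1+t)^{\delta-1}\) is bounded there for any real \(\delta\).  Near \(t=\infty\) the factor \(e^{-zt}\) with \(z>0\) dominates every power \(t^{A-1}(1+t)^{\delta-1}\).  Hence the moment domain contains \((0,\infty)\), and so contains every sum \(a+b\) with \(a,b>0\).
\item Apply \cref{thm:mellin} with \(X=Y=(0,\infty)\).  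The measure \(\mu_{z,\delta}\) has a strictly positive density on all of \((0,\infty)\), so its support is infinite, and the theorem yields \(\STP_\infty\) on \((0,\infty)^2\).
\end{enumerate}

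There is no real obstacle in this argument.  The only points requiring care are that no restriction on \(\delta\) is needed here, and that the domain must be \((0,\infty)^2\) so that the integral representation and the finiteness of the moments are both available.  The restriction \(\delta<1\) in the ungamma-weighted statement announced in the introduction must come from a separate step that removes the factor \(\Gamma(a+b)\).  That factor is not of row-column form, so \cref{prop:safe-transformations}(i) cannot absorb it; this is exactly why the present theorem is stated with the gamma weight included.
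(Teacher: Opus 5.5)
Your proposal is correct and follows the paper's proof essentially verbatim. You rewrite \(\Gamma(A)U(A,A+\delta,z)\) as the Mellin moment of the positive measure \(t^{-1}e^{-zt}(1+t)^{\delta-1}\,\dd t\), check that the moments are finite for \(A>0\) and that the support is infinite, and invoke \cref{thm:mellin}; your remark that the \(\delta<1\) restriction only enters when removing the non-row-column factor \(\Gamma(a+b)\) also matches how the paper handles \cref{thm:tricomi-unnormalized}.
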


\begin{proof}
For \(A=a+b>0\), \eqref{eq:U-integral} gives
\begin{equation}\label{eq:gammaU-moment}
        \Gamma(A)U(A,A+\delta,z)
        =\int_0^\infty t^{A-1}e^{-zt}(1+t)^{\delta-1}\,\dd t.
\end{equation}
This is the Mellin moment \(\int t^A\,\dd\mu(t)\) of the positive measure
\begin{equation}
        \dd\mu(t)=t^{-1}e^{-zt}(1+t)^{\delta-1}\,\dd t.
\end{equation}
The measure has infinite support and all moments with \(A>0\) are finite.  \Cref{thm:mellin} gives \(\STP_\infty\).  The condition \(a,b>0\) is used here only to ensure \(A=a+b>0\), the integrability condition at the origin.
\end{proof}

For \(\delta<1\), the gamma factor can be removed.  This is exactly the region needed for the Coulomb regularization.

\begin{theorem}[Unnormalized Tricomi ray total positivity]\label{thm:tricomi-unnormalized}
Fix \(z>0\) and \(\delta<1\).  Then
\begin{equation}\label{eq:U-ray-kernel}
        (a,b)\longmapsto U(a+b,a+b+\delta,z)
\end{equation}
 is \(\STP_\infty\) on \((0,\infty)^2\).  When \(\delta=1\), the same kernel is rank one, namely \(U(A,A+1,z)=z^{-A}\), so strict total positivity fails beyond order one.
\end{theorem}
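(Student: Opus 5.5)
We plan to remove the factor \(\Gamma(A)\) from \eqref{eq:gammaU-moment} by a gamma-averaging step in the spirit of \cref{thm:gamma-cm}. Write \(A=a+b>0\) and substitute \(t=y/z\) in \eqref{eq:U-integral}:
\begin{equation*}
        U(A,A+\delta,z)=\frac{z^{-A}}{\Gamma(A)}\int_0^\infty y^{A-1}e^{-y}\Bigl(1+\frac{y}{z}\Bigr)^{\delta-1}\dd y .
\end{equation*}
Thus \(U(A,A+\delta,z)=z^{-A}F_g(A)\) in the notation of \eqref{eq:Fg-def}, with \(g(y)=(1+y/z)^{\delta-1}\). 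For \(\delta<1\) the exponent \(-(1-\delta)\) is negative, so \(g\) is completely monotone on \((0,\infty)\). Explicitly, its Bernstein measure is
\begin{equation*}
        \dd\mu(s)=\frac{z^{1-\delta}}{\Gamma(1-\delta)}\,s^{-\delta}e^{-zs}\,\dd s ,
\end{equation*}
which follows from \(\int_0^\infty s^{\kappa-1}e^{-(z+y)s}\dd s=\Gamma(\kappa)(z+y)^{-\kappa}\) with \(\kappa=1-\delta>0\).

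Next we apply \cref{thm:gamma-cm}. The push-forward of \(\mu\) under \(s\mapsto(1+s)^{-1}\) is absolutely continuous on \((0,1)\), so it has infinite support. For \(A>0\), the moment \(\int(1+s)^{-A}\dd\mu(s)\) is finite, because \(s^{-\delta}\) is integrable at \(0\) when \(\delta<1\) and \(e^{-zs}\) controls the tail. Hence \(I_\mu\supseteq(0,\infty)\), and \((a,b)\mapsto F_g(a+b)\) is \(\STP_\infty\) on \((0,\infty)^2\). Finally, \(z^{-(a+b)}=z^{-a}z^{-b}\) is a positive row–column factor, so \cref{prop:safe-transformations}(i) transfers strict total positivity to \(U(a+b,a+b+\delta,z)\).

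An alternative route avoids \cref{thm:gamma-cm}. Insert the Bernstein representation into the integral and integrate in \(t\) first; this gives \(U(A,A+\delta,z)=\int(z+s)^{-A}\,\dd\mu(s)\), a Mellin moment with base \((z+s)^{-1}\), which is exactly what \cref{thm:mellin} requires. Either version works, and the first matches the paper's normalization calculus more closely.

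For \(\delta=1\), we have \(g\equiv1\) and \(\mu=\delta_0\), so \(U(A,A+1,z)=z^{-A}\). The kernel \(z^{-a}z^{-b}\) has rank one, so every \(2\times2\) minor vanishes, which matches the one-point case of \cref{cor:all-turan}.

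The main obstacle is justifying the interchange of integrals and the finiteness of the moments near \(s=0\). Both rest on the hypothesis \(\delta<1\), through \(\Gamma(1-\delta)<\infty\) and the integrability of \(s^{-\delta}\). Tonelli's theorem applies because every integrand is nonnegative.
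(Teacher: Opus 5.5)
Your main argument is correct and is essentially the paper's own proof. It uses the substitution $y=zt$, the Bernstein measure $z^{1-\delta}s^{-\delta}e^{-zs}\,\dd s/\Gamma(1-\delta)$ for $(1+y/z)^{\delta-1}$, \cref{thm:gamma-cm}, the row--column factor $z^{-a}z^{-b}$, and the rank-one degeneration $U(A,A+1,z)=z^{-A}$ at $\delta=1$.

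One harmless slip is in your alternative route. With $\mu$ as you defined it (in the $y$ variable), the identity is $U(A,A+\delta,z)=\int(z+zs)^{-A}\,\dd\mu(s)$. The form $\int(z+s)^{-A}\,\dd\nu(s)$ holds instead for the $t$-variable measure $\dd\nu(s)=s^{-\delta}e^{-s}\,\dd s/\Gamma(1-\delta)$. Either way you get a positive Mellin moment with infinite support, so the conclusion stands.
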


\begin{proof}
Set \(A=a+b\), and in \eqref{eq:U-integral} use \(y=zt\):
\begin{equation}\label{eq:U-gamma-average}
        U(A,A+\delta,z)
        =z^{-A}\frac{1}{\Gamma(A)}\int_0^\infty e^{-y}y^{A-1}\left(1+\frac{y}{z}\right)^{\delta-1}\,\dd y.
\end{equation}
Let \(\alpha=1-\delta>0\).  Then
\begin{equation}\label{eq:tricomi-cm-multiplier}
        \left(1+\frac{y}{z}\right)^{-\alpha}
        =\frac{z^\alpha}{\Gamma(\alpha)}\int_0^\infty s^{\alpha-1}e^{-zs}e^{-ys}\,\dd s,
\end{equation}
so the multiplier in \eqref{eq:U-gamma-average} is completely monotone with non-atomic representing measure of infinite support.  \Cref{thm:gamma-cm} gives \(\STP_\infty\) for the gamma average, and the factor \(z^{-(a+b)}=z^{-a}z^{-b}\) is a positive row-column factor.  If \(\delta=1\), then the multiplier is \(1\) and \(U(A,A+1,z)=z^{-A}\), which has all minors of order \(\geq2\) equal to zero.
\end{proof}

\begin{remark}[Comparison with shifted-Tricomi log-convexity]
Taking \(A=a+b\), \cref{thm:tricomi-unnormalized} implies ordinary
log-convexity of \(A\mapsto U(A,A+\delta,z)\) for \(\delta<1\).  This scalar
statement lies in the range of shifted-parameter Tricomi log-convexity results,
for example Yang's results for shifted parameters \cite{Yang2025Extension}.
The determinantal statement proved here is that every Hankel minor
\begin{equation}\label{eq:tricomi-all-minor-comparison}
        \det\left[U(a_i+b_j,a_i+b_j+\delta,z)\right]_{i,j=1}^m
\end{equation}
is strictly positive for increasing \(a_i,b_j>0\).
\end{remark}

\begin{remark}[The ray boundary]
The gamma-weighted kernel in \cref{thm:tricomi-gamma} is strictly totally positive for every real \(\delta\).  The unnormalized theorem above removes the gamma factor by writing \((1+y/z)^{\delta-1}\) as a completely monotone multiplier, which is available in the range \(\delta<1\).  At \(\delta=1\) the ray degenerates to the rank-one kernel \(z^{-(a+b)}\).  The unnormalized ray for \(\delta>1\) requires separate arguments.
\end{remark}

\subsection{The Coulomb regularization as the ray \texorpdfstring{\(\delta=1/2\)}{delta = 1/2}}

The one-dimensional Coulomb regularization introduced by Ruskai--Werner and studied further by Alzer \cite{RuskaiWerner2000,Alzer2005} is
\begin{equation}\label{eq:Vq-def}
        V_q(x)=\frac{2e^{x^2}}{\Gamma(q+1)}\int_x^\infty e^{-t^2}(t^2-x^2)^q\,\dd t,
        \qquad q>-1,\quad x>0.
\end{equation}
Baricz and Pog\'any recorded, citing Baricz's thesis, the open problem whether
\(q\mapsto V_q(x)\) is log-convex on \((-1,\infty)\) for fixed \(x>0\);
see \cite[p.~87]{Baricz2008thesis} and \cite[p.~62]{BariczPogany2013}.
The theorem below gives a stronger statement: the whole Hankel kernel
\begin{equation}\label{eq:coulomb-hankel-kernel-display}
        (a,b)\mapsto V_{a+b-1}(x)
\end{equation}
is \(\STP_\infty\).  Its \(2\times2\) minor gives the requested scalar
log-convexity.  At scalar level, the identity
\begin{equation}\label{eq:coulomb-tricomi-overlap}
        V_q(x)=x^{2q+1}U(q+1,q+3/2,x^2)
\end{equation}
connects the consequence with Yang's shifted-Tricomi results
\cite{Yang2025Extension}; the theorem records the all-order Hankel
determinant hierarchy.

\begin{theorem}[Coulomb regularization determinant theorem]\label{thm:coulomb}
For every fixed \(x>0\), the kernel
\begin{equation}\label{eq:coulomb-kernel}
        (a,b)\longmapsto V_{a+b-1}(x),\qquad a,b>0,
\end{equation}
 is \(\STP_\infty\).  Consequently \(q\mapsto V_q(x)\) is strictly log-convex on \((-1,\infty)\).
\end{theorem}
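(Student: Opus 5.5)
The plan is to reduce \cref{thm:coulomb} to the $\delta=1/2$ case of \cref{thm:tricomi-unnormalized} via the identity \eqref{eq:coulomb-tricomi-overlap}, and then remove the extra factors with \cref{prop:safe-transformations}.

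\textbf{Step 1: verify the identity.} In \eqref{eq:Vq-def} substitute $t^2=x^2(1+s)$, so $t\,\dd t=\tfrac{x^2}{2}\dd s$ and $\dd t=\tfrac{x}{2}(1+s)^{-1/2}\dd s$. Then $(t^2-x^2)^q=x^{2q}s^q$ and $e^{x^2}e^{-t^2}=e^{-x^2 s}$, giving
\begin{equation*}
V_q(x)=\frac{x^{2q+1}}{\Gamma(q+1)}\int_0^\infty e^{-x^2 s}s^{q}(1+s)^{-1/2}\,\dd s .
\end{equation*}
By \eqref{eq:U-integral} with $a=q+1$, $c=q+3/2$, $z=x^2$, the exponent is $c-a-1=-1/2$. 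This gives $V_q(x)=x^{2q+1}U(q+1,q+3/2,x^2)$ for $q>-1$, which is exactly where $a=q+1>0$ is needed.

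\textbf{Step 2: read off the kernel.} Put $q=a+b-1$ with $a,b>0$, so $q>-1$ and $A=q+1=a+b$. Then
\begin{equation*}
V_{a+b-1}(x)=x^{2(a+b)-1}\,U(a+b,a+b+1/2,x^2)=x^{-1}\,x^{2a}\,x^{2b}\,U(a+b,a+b+1/2,x^2).
\end{equation*}
\Cref{thm:tricomi-unnormalized} with $\delta=1/2<1$ and $z=x^2>0$ shows that $(a,b)\mapsto U(a+b,a+b+1/2,x^2)$ is $\STP_\infty$ on $(0,\infty)^2$. The prefactor is a positive row--column factor $\alpha(a)\beta(b)$ with $\alpha(a)=x^{-1}x^{2a}$ and $\beta(b)=x^{2b}$. \Cref{prop:safe-transformations}(i) therefore yields $\STP_\infty$ for $(a,b)\mapsto V_{a+b-1}(x)$.

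\textbf{Step 3: strict log-convexity.} Given $-1<p<r$, take the $2\times2$ minor with $a_1=b_1=(p+1)/2$ and $a_2=b_2=(r+1)/2$. Both values are positive and strictly increasing. Positivity of the minor gives
\begin{equation*}
V_{(p+r)/2}(x)^2<V_p(x)V_r(x),
\end{equation*}
which is strict midpoint log-convexity. Since $q\mapsto V_q(x)$ is continuous (by dominated convergence in the integral), this upgrades to strict log-convexity on $(-1,\infty)$.

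The only real obstacle is Step 1: getting the change of variables and the exponent $-1/2$ right, and checking that the domain matches $a+b>0$. Everything else is direct invocation of earlier results.
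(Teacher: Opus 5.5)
Your proposal is correct and is essentially the paper's argument. The paper substitutes $u=t^2-x^2$ and applies \cref{thm:gamma-cm} directly with the completely monotone multiplier $g_x(u)=(u+x^2)^{-1/2}$. You instead pass through $V_q(x)=x^{2q+1}U(q+1,q+3/2,x^2)$ and \cref{thm:tricomi-unnormalized} with $\delta=1/2$, which the paper itself records as an equivalent derivation in its Tricomi-identification remark; its proof of that case uses the same multiplier $(1+y/x^2)^{-1/2}$ up to a constant, so the two routes are the same computation. Your explicit midpoint-plus-continuity step for strict log-convexity correctly fills in the ``consequently'' that the paper leaves implicit.
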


\begin{proof}
The change of variables \(u=t^2-x^2\) in \eqref{eq:Vq-def} gives
\begin{equation}\label{eq:Vq-gamma-average}
        V_q(x)
        =
        \frac{1}{\Gamma(q+1)}
        \int_0^\infty u^q e^{-u}(u+x^2)^{-1/2}\,\dd u,
        \qquad q>-1 .
\end{equation}
Set \(A=q+1\) and
\begin{equation}\label{eq:coulomb-gx-def}
        g_x(u)=(u+x^2)^{-1/2}.
\end{equation}
For \(x>0\), \(g_x\) is completely monotone, since
\begin{equation}\label{eq:coulomb-cm-representation}
        (u+x^2)^{-1/2}
        =
        \frac{1}{\sqrt{\pi}}
        \int_0^\infty s^{-1/2}e^{-x^2s}e^{-us}\,\dd s .
\end{equation}
Thus
\begin{equation}\label{eq:coulomb-gamma-normalized-transform}
        V_{A-1}(x)
        =
        \frac{1}{\Gamma(A)}
        \int_0^\infty u^{A-1}e^{-u}g_x(u)\,\dd u
\end{equation}
is exactly the gamma-normalized transform in \cref{thm:gamma-cm}.  The
representing measure
\begin{equation}\label{eq:coulomb-representing-measure}
        \frac{1}{\sqrt{\pi}}s^{-1/2}e^{-x^2s}\,\dd s
\end{equation}
has infinite support on \((0,\infty)\).  Hence
\begin{equation}\label{eq:coulomb-stp-kernel-proof}
        (a,b)\longmapsto V_{a+b-1}(x)
\end{equation}
is \(\STP_\infty\) on \(a,b>0\).
\end{proof}

\begin{remark}[Tricomi identification]
Equivalently, the further change \(u=x^2t\) yields
\begin{equation}\label{eq:Vq-Tricomi}
        V_q(x)=x^{2q+1}U(q+1,q+3/2,x^2).
\end{equation}
Thus the Coulomb theorem is the \(\delta=1/2\) case of
\cref{thm:tricomi-unnormalized}, up to the positive row-column factor
\begin{equation}\label{eq:V-tricomi-row-column}
        x^{2(a+b)-1}=x^{-1}x^{2a}x^{2b}.
\end{equation}
\end{remark}

\begin{corollary}[Coulomb Tur\'an determinants]\label{cor:coulomb-dets}
Fix \(x>0\).  If \(-1<p_1<\cdots<p_m\) and \(-1<q_1<\cdots<q_m\), then
\begin{equation}\label{eq:coulomb-higher-dets}
        \det\left[V_{(p_i+q_j)/2}(x)\right]_{i,j=1}^m>0.
\end{equation}
In particular, for \(p\neq q\),
\begin{equation}\label{eq:coulomb-logconvex}
        V_{(p+q)/2}(x)^2<V_p(x)V_q(x).
\end{equation}
\end{corollary}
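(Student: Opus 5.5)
The plan is to read \cref{cor:coulomb-dets} directly off \cref{thm:coulomb}, using an affine reindexing of the kind already used for \cref{cor:mills-original-notation}. The goal is to write each entry \(V_{(p_i+q_j)/2}(x)\) as \(V_{a_i+b_j-1}(x)\), with \(a_i\) and \(b_j\) positive and strictly increasing. The natural choice is
\begin{equation}
        a_i=\frac{p_i+1}{2},\qquad b_j=\frac{q_j+1}{2}.
\end{equation}
With this choice \(a_i+b_j-1=(p_i+q_j)/2\). The hypothesis \(-1<p_1<\cdots<p_m\) gives \(0<a_1<\cdots<a_m\), and similarly for the \(b_j\). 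The maps \(p\mapsto(p+1)/2\) and \(q\mapsto(q+1)/2\) are strictly increasing, so by \cref{prop:safe-transformations}(ii) the determinant in \eqref{eq:coulomb-higher-dets} is exactly a minor of the kernel \((a,b)\mapsto V_{a+b-1}(x)\) on \((0,\infty)^2\) with increasing nodes. \Cref{thm:coulomb} then gives strict positivity of that minor for every \(m\).

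For the scalar inequality \eqref{eq:coulomb-logconvex}, I will assume without loss of generality that \(p<q\). Take \(m=2\) and \(p_1=q_1=p\), \(p_2=q_2=q\). The \(2\times2\) determinant is then
\begin{equation}
        V_p(x)V_q(x)-V_{(p+q)/2}(x)^2>0.
\end{equation}
The off-diagonal entries both equal \(V_{(p+q)/2}(x)\) because the index is symmetric. This is precisely \eqref{eq:coulomb-logconvex}.

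I do not expect a genuine obstacle, since the argument is bookkeeping. The only points to check are that \(p_i,q_j>-1\) corresponds exactly to \(a_i,b_j>0\), which is the domain on which \cref{thm:coulomb} is stated. It also requires that all entries \(V_{(p_i+q_j)/2}(x)\) are finite. They are, because \((p_i+q_j)/2>-1\), so the integral \eqref{eq:Vq-gamma-average} converges at the origin.
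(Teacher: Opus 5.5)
Your proposal is correct and matches the paper's proof: substitute \(a_i=(p_i+1)/2\), \(b_j=(q_j+1)/2\) into \cref{thm:coulomb}, and obtain the scalar inequality as the symmetric \(2\times2\) minor. The appeal to \cref{prop:safe-transformations}(ii) is harmless but unnecessary, since the determinant is literally a minor of the kernel at the increasing positive nodes \(a_i,b_j\).
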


\begin{proof}
Apply \cref{thm:coulomb} with \(a_i=(p_i+1)/2\) and \(b_j=(q_j+1)/2\).
\end{proof}

\section{Consequences and limitations}

Two points guide the interpretation of the determinant results.  First, the
general moment-to-Hankel implication is classical; the special-function content
lies in recognizing the correct normalized order variable or the correct
positive atomic Mellin measure.  Second, scalar
log-convexity can enter through more than one route.  In the Coulomb case,
shifted-Tricomi log-convexity also yields the scalar consequence, while the
present theorem proves strict positivity of every minor in the Hankel matrix
of the order parameter.

For a positive Hankel kernel \(K(a,b)=F(a+b)\), strict log-convexity of \(F\) is the \(2\times2\) shadow of strict total positivity.  If \(u<v\), then
\begin{equation}
        \det\begin{pmatrix}
        F(u)&F((u+v)/2)\\
        F((u+v)/2)&F(v)
        \end{pmatrix}>0
\end{equation}
is exactly
\begin{equation}
        F((u+v)/2)^2<F(u)F(v).
\end{equation}
The determinant theorems above are therefore stronger than scalar Tur\'an or log-convexity inequalities: they give all Hankel minors on the same order domain.

The method has a structural boundary.  Positive row-column factors, increasing reparametrizations, positive push-forwards, and gamma-normalized completely monotone averages preserve the moment structure.  General quotients require additional comparison principles.  For example,
\begin{equation}
        \Gamma(a,x)=\int_x^\infty t^{a-1}e^{-t}\,\dd t
\end{equation}
is a positive Mellin moment in \(a\), so \((a,b)\mapsto \Gamma(a+b,x)\) is covered by \cref{thm:mellin}.  The normalized tail
\begin{equation}
Q(a,x)=\frac{\Gamma(a,x)}{\Gamma(a)}
\end{equation}
is a quotient of two positive Hankel moment kernels, and the denominator falls outside the row-column factor class.  Its determinant signs therefore require an additional quotient or comparison principle beyond order-moment transport alone.

The reciprocal Jackson \(q\)-gamma problem has the same warning sign.  The
positive kernel \(\Gamma_q(a+b)\) is covered by \cref{thm:qgamma}; the reciprocal
kernel \(1/\Gamma_q(a+b)\) is a quotient-type object, and its expected
sign-regularity requires a different principle.

Thus gamma normalization can expose hidden moment structure in order-parameter
inequalities, and atomic Mellin support can prove strictness for every minor
without any continuity of the representing measure.  In the Jackson
\(q\)-gamma case this proves the positive Hankel conjecture.  In the
Mills-ratio family it answers Yang's half-gamma log-convexity question and
gives all Hankel Tur\'an determinants.  In the Coulomb family it gives strict
total positivity of \((a,b)\mapsto V_{a+b-1}(x)\), whose first minor is the
scalar log-convexity inequality and whose higher minors extend beyond scalar
convexity.

\section*{Declaration of competing interest}
The author declares that there are no known competing financial interests or personal relationships that could have appeared to influence the work reported in this paper.

\section*{Data availability}
No data were used for the research described in this article.

\section*{Declaration of generative AI and AI-assisted technologies in the manuscript preparation process}
During the preparation of this work, the author used the Pudim AI research workflow, including Codex with GPT-5.5, to support manuscript organization, literature checking, revision review, and screening of the Jackson \(q\)-gamma application labelled APP-0080. After using these tools, the author reviewed and edited the content as needed and takes full responsibility for the content of the published article.

\end{document}